
\documentclass[12pt,reqno]{article}
\usepackage{graphicx}
\usepackage{amssymb}
\usepackage{amsmath}
\usepackage{amsthm}
\usepackage{amsfonts}
\usepackage{url}
\usepackage{hyperref}
\usepackage{breakurl}
\usepackage[T1]{fontenc}        

\newcommand{\comment}[1]{}
\newcommand{\raisecomma}{\raisebox{2pt}{$,$}}
\newcommand{\raisedot}{\raisebox{2pt}{$.$}}
\newcommand{\sign}{\text{sgn}}
\newcommand{\Dbar}{{\mathcal R}} 
\newcommand{\Had}{{\mathcal H}}

\newcommand{\C}{{\mathbb C}}
\newcommand{\R}{{\mathbb R}}
\newcommand{\N}{{\mathbb N}}

\newcommand{\Z}{{\mathbb Z}}

\newcommand{\Tabexceptions}{{Table~1}}	
\newtheorem{theorem}{Theorem}
\newtheorem{corollary}{Corollary}
\newtheorem{lemma}{Lemma}
\newtheorem{proposition}{Proposition}
\newtheorem{definition}{Definition}
\newtheorem{remark}{Remark}

\begin{document}
\bibliographystyle{plain}
\title{~\\[-40pt]
Lower bounds on maximal determinants\\
 of $\pm 1$ matrices\\ 
 via the probabilistic method
}
\author{Richard P.\ Brent\\
Australian National University\\
Canberra, ACT 0200,
Australia\\
\and
Judy-anne H.\ Osborn\\
The University of Newcastle\\
Callaghan, NSW 2308,
Australia\\
\and
Warren D.\ Smith\\
Center for Range Voting\\			
21 Shore Oaks Drive, Stony Brook\\ NY 11790, USA\\
}

\date{\today}

\maketitle
\thispagestyle{empty}                   

\begin{abstract}
We show that the maximal determinant
$D(n)$ for $n \times n$ $\{\pm 1\}$-matrices 
satisfies $\Dbar(n) := D(n)/n^{n/2} \ge \kappa_d > 0$.
Here $n^{n/2}$ is the Hadamard upper bound,
and $\kappa_d$ depends only on $d := n-h$, 
where $h$ is the maximal order of
a Hadamard matrix with $h \le n$. 
Previous lower bounds on $\Dbar(n)$ depend on both $d$ and
$n$. Our bounds are improvements, 
for all sufficiently large $n$, if $d > 1$.

We give various lower bounds
on $\Dbar(n)$ that depend only on~$d$.
For example,
$\Dbar(n) \ge 0.07\,(0.352)^d > 3^{-(d+3)}$.
For any fixed $d \ge 0$ we have
$\Dbar(n) \ge (2/(\pi e))^{d/2}$
for all sufficiently large $n$ (and conjecturally for all positive~$n$).
If the Hadamard conjecture is true, then
$d \le 3$ and $\kappa_d \ge (2/(\pi e))^{d/2} > 1/9$.
\end{abstract}

\pagebreak[4]
\section{Introduction}		\label{sec:intro}

Let $D(n)$ be the maximal determinant possible for an $n\times n$
matrix with elements drawn from the real interval $[-1, 1]$.
Hadamard~\cite{Hadamard}
\footnote{For
earlier contributions by Desplanques, 
L\'evy, 
Muir, 
Sylvester 
and Thomson (Lord Kelvin), 
see~\cite{Muir26,Sylvester} and~\cite[pg.~384]{MS}.}
proved that $D(n) \le n^{n/2}$, and the
\emph{Hadamard conjecture} is that a matrix achieving this upper bound
exists for each positive integer $n$ divisible by four.
The function $\Dbar(n) := D(n)/n^{n/2}$ 
is a measure of the sharpness of the Hadamard bound.
Clearly $\Dbar(n) = 1$ if a Hadamard matrix of order $n$
exists; otherwise $\Dbar(n) < 1$.  The aim of this paper is
to give lower bounds on $\Dbar(n)$.

If $h \le n$ is the order of a Hadamard matrix, and $d=n-h$, then
we show that $\Dbar(n)$ is bounded below by a positive constant
$\kappa_d$ (depending on $d$ but not on $n$).
When $d>1$ 
this improves on previous results\footnote{See \cite[Theorem~9]{rpb249} 
and the
references cited there.  For example, the well-known bound of
Clements and Lindstr\"om~\cite[Corollary to Thm.~2]{CL65} only shows that
$\Dbar(n) > (3/4)^{n/2}$.}
for which the lower
bound was (at best) of order $n^{-\alpha d}$ for some constant
$\alpha \ge 1/2$.
Rokicki \emph{et al}~\cite{Orrick-prog} conjectured that
$\Dbar(n) \ge 1/2$ on the basis of computational results
for $n \le 120$.  

We obtain lower bounds on $\Dbar(n)$ using the probabilistic
method pioneered by Erd\H{o}s (see for example~\cite{AS,ES}).
Specifically, we adjoin $d$ extra columns to the $h \times h$ Hadamard matrix,
and fill their $h \times d$ entries with random signs
obtained by independently tossing fair coins.
Then we adjoin $d$ extra rows, and fill their $d \times (h+d)$ entries with
$\pm 1$ signs chosen deterministically 
in a way intended to approximately 
maximize the determinant of the final matrix.
To do so, we use the fact 
that this 
determinant can be expressed in terms 
of the  $d \times d$ Schur complement
(see~\S\ref{sec:Schur}).
In the proof of Theorem~\ref{thm:unconditional_bound} we obtain a lower
bound on the expected value of the determinant in a direct manner.
In the proofs of Theorems~\ref{thm:unconditional_bound2}
and~\ref{thm:unconditional_bound3}
we use 
a Hoeffding tail bound to show that the
Schur complement is, with high probability,
sufficiently diagonally dominant that its determinant is close
to the product of its diagonal elements. 
We employ two possibly new inequalities,
Lemma~\ref{lemma:optimal_pert_bound} and
Lemma~\ref{lemma:nice_pert_bound} in \S\ref{sec:lemmas},
that give lower bounds on the 
determinant of a diagonally dominant matrix.
The bounds are sharper than the obvious bounds arising from
Gerschgorin's circle theorem~\cite{Gerschgorin,Varga},
so may be of independent interest.

\comment{
So far, the result has only been probabilistic, but since the success
probability is positive, that tells us that a suitable configuration of coin
tosses must {\it deterministically exist} so that an $n \times n$ sign
matrix indeed must exist satisfying our lower bound.  (And indeed the
argument could be derandomized using the ``method of conditional
expectations'' which would show there is a polynomial-time algorithm to find
a suitable $n \times n$ matrix?)  WDS

NB: I believe this if $d=1$ but I'm not sure about it if $d > 1$ because
of our way of handling the off-diagonal elements in the Schur complement.
That's why I've commented out this paragraph. RPB
} 

\comment{
We found several different proofs all with this same basic
strategy,
with different amounts of complexity and different strengths in various parts of 
their arguments, and the ones we inserted into this paper were selected somewhat
arbitrarily from our collection.
} 

In the special case $d=1$ our argument simplifies,
because there is no need to consider a nontrivial Schur complement or to
deal with the contribution of the off-diagonal elements.
This case was (essentially) already considered by
Brown and Spencer~\cite{BS}, Erd\H{o}s and Spencer~\cite[Ch.~15]{ES}, 
and (independently) by 
Best~\cite{Best}; see also~\cite[\S2.5]{AS}
and ~\cite[Problem A4]{Putnam74}.
The consequence for lower bounds on $\Dbar(n)$ when
$n \equiv 1 \bmod 4$ was exploited by
Farmakis and Kounias~\cite{FK}, 
and an improvement using $3$-normalized Hadamard
matrices was considered by Orrick and Solomon~\cite{OS07}.

In \S\ref{sec:gaps} we review previous results that give upper bounds on
gaps between the orders of Hadamard matrices.
These are relevant as they enable us to bound $d = n-h$ 
as a function of~$h$.  

Various preliminary results are proved in \S\ref{sec:lemmas},
and the main results are proved in \S\ref{sec:unconditional}.
Theorem~\ref{thm:unconditional_bound}
applies for fixed $d$ and $h \ge h_0(d)$,
where the function $h_0(d)$ 
grows rapidly, but this is not significant for the cases $d\le 3$
that arise if we assume the Hadamard conjecture.
For $d \le 3$, Corollary~\ref{cor:small_d}
shows that $\Dbar(n)$ is bounded below by 
$(2/(\pi e))^{d/2} > 1/9$, coming close to
Rokicki \emph{et al}'s conjectured lower bound of $1/2$,
and improving on earlier results~\cite{rpb249,CL65,Cohn63,KMS00,LL}
that failed to obtain a constant lower bound on $\Dbar(n)$ for 
$2 \le d \le 3$.

At the cost of more complicated proofs,
Theorems~\ref{thm:unconditional_bound2} and~\ref{thm:unconditional_bound3} 
apply to larger regions of $(d,h)$-space.
Theorem~\ref{thm:unconditional_bound2}
applies for $h/\ln h \ge 16d^3$,
and Theorem~\ref{thm:unconditional_bound3}
applies for $h \ge 6d^3$.
In view of known results on gaps between
Hadamard orders, discussed in \S\ref{sec:gaps}, these theorems
give a lower bound on $\Dbar(n)$ for all but a finite set $E$ of 
positive integers~$n$.
We have obtained a lower bound on $\Dbar(n)$ for each $n\in E$ by
explicit computation, using a probabilistic algorithm that uses the
same construction as the proofs of these theorems.
This leads to Theorem~\ref{thm:no_exceptions}, which gives a lower bound
$\Dbar(n) > 3^{-(d+3)}$ that is valid for all positive integers~$n$
(the constants here are not the best possible).

\subsubsection*{Acknowledgements}
We thank Robert Craigen for informing us of the work of his student Ivan
Livinskyi, and Will Orrick for his comments and for providing a copy of the
unpublished report~\cite{Orrick-prog}. Dragomir {\DJ}okovi\'c and Ilias
Kotsireas shared their list of known small Hadamard orders, which was very
useful for checking the program that we used in the proof of
Lemma~\ref{lemma:hadregion}.

\section{Gaps between Hadamard orders}	\label{sec:gaps}

In order to apply our results to obtain a lower bound on $\Dbar(n)$ for
given $n$, we need to know the order $h$ of a Hadamard matrix with
$h \le n$ and $n-h$ preferably as small as possible.  Thus, it is of
interest to consider the size of possible gaps in the sequence 
$(n_i)_{i \ge 1}$ of Hadamard
orders.  
We define the \emph{Hadamard gap function} $\gamma:\R \to \Z$ by
\begin{equation}	\label{eq:gamma_defn}
\gamma(x) := \max \{n_{i+1}-n_i \,|\, n_{i} \le x\} \cup \{0\}\,.
\end{equation}
In~\cite{rpb249} it was shown,
using the Paley and Sylvester constructions, 
that $\gamma(n)$ can be bounded using the prime-gap function.
For example, if $p$ is an odd prime, then $2(p+1)$ is a Hadamard order.
However, only rather weak bounds on the prime-gap function are known.
A different approach which produces asymptotically-stronger 
bounds employs results of 
Seberry~\cite{Seberry}, as subsequently sharpened
by Craigen~\cite{Craigen1}, Livinskyi~\cite{Livinskyi}, and 
Smith~\cite{Smith-epsilon}. 
These results take the following form: 
for any odd positive
integer~$q$, a Hadamard matrix of order $2^t q$ exists for every integer
\[t \ge \alpha \log_2(q) + \beta,\]
where $\alpha$ and $\beta$ are author-dependent constants.
Seberry~\cite{Seberry} obtained $\alpha=2$. 
Craigen~\cite{Craigen1}
improved this to $\alpha = 2/3$, $\beta = 16/3$,
and later obtained 
$\alpha = 3/8$ 
in unpublished 
work with Tiessen quoted
in~\cite[Thm.~2.27]{Horadam}
and \cite{Craigen2,CK}.%
\footnote{There are typographical errors
in~\cite[Thm.~2.27]{Horadam} and
in~\cite[Thm.~1.43]{CK}, 
where the floor function should
be replaced by the ceiling function. This has the effect of increasing
the additive constant $\beta$.}
Livinskyi~\cite{Livinskyi} found $\alpha = 1/5$, $\beta = 64/5$.
Smith's unpublished paper~\cite{Smith-epsilon} 
shows that $\gamma(n) = O(n^\varepsilon)$
for each $\varepsilon > 0$, but the constants hidden in the ``$O$'' 
in this result can be very large, so we do not use Smith's result here.

The connection between these results and the
Hadamard gap function is given by Lemma~\ref{lemma:conversion}.  
{From} the lemma
and the results of Livinskyi, the Hadamard gap function satisfies
\begin{equation}	\label{eq:gamma_Livinskyi}
\gamma(n) = O(n^{1/6}).
\end{equation}
This is much sharper than  $\gamma(n) = O(n^{21/40})$ 
arising from the best current result for prime gaps (by
Baker, Harman and Pintz~\cite{BHP}),
although not as sharp as the result
$\gamma(n) = O(\log^2 n)$ that would follow from
Cram\'er's prime-gap conjecture~\cite{rpb249,Cramer,Shanks64,Silva}.
\begin{lemma}	\label{lemma:conversion}
Suppose there exist constants $\alpha$, $\beta$ such that, 
for any odd positive integer $q$, a Hadamard matrix of order $2^t q$
exists for all $t \ge \alpha \log_2(q) + \beta$.
Then the Hadamard gap function $\gamma(n)$ satisfies
\[\gamma(n) = O(n^{\alpha/(1+\alpha)})\,.
\]
\end{lemma}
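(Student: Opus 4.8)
The plan is to show that above every sufficiently large real $y \le x$ there is a guaranteed Hadamard order within distance $O(x^{\alpha/(1+\alpha)})$. Since $\gamma(x)$ defined in~\eqref{eq:gamma_defn} is the largest gap $n_{i+1}-n_i$ among consecutive Hadamard orders with $n_i \le x$, such a uniform covering bound (applied with $y = n_i$) yields the claim at once. The raw material is the hypothesis, which asserts that $2^t q$ is a Hadamard order whenever $q$ is odd and $q \le Q(t) := 2^{(t-\beta)/\alpha}$. I would fix the exponent $t$ and regard the set $\{2^t q : q \text{ odd},\ q \le Q(t)\}$ as a \emph{comb}: an arithmetic-progression-like family of Hadamard orders with common spacing $2^{t+1}$ (consecutive odd $q$ differ by $2$), occupying the interval from $2^t$ at $q=1$ up to $M(t) := 2^t \lfloor Q(t)\rfloor = \Theta\!\left(2^{t(1+1/\alpha)}\right)$. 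The two competing features are that larger $t$ pushes the top $M(t)$ of the comb higher, so that it reaches up to $x$, but also coarsens the spacing $2^{t+1}$; the exponent $\alpha/(1+\alpha)$ is exactly the outcome of balancing these.

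Concretely, for a target value $y$ I would take $t = t(y)$ to be the least integer with $2^t \ge c\, y^{\alpha/(1+\alpha)}$, for a fixed constant $c$ depending only on $\alpha$ and $\beta$. A short computation then gives two facts. First, the comb reaches past $y$: since $2^t = \Theta\!\left(y^{\alpha/(1+\alpha)}\right)$ we get $M(t) = \Theta\!\left((2^t)^{(1+\alpha)/\alpha}\right) = \Theta(y)$, and choosing $c$ large enough forces $M(t) \ge 2y$, while the lower end satisfies $2^t \le y$ because $\alpha/(1+\alpha) < 1$; hence $y$ lies strictly inside the range $[2^t, M(t)]$ spanned by the comb. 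Second, the spacing of the comb is $2^{t+1} = O\!\left(y^{\alpha/(1+\alpha)}\right)$.

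With $y$ interior to the comb, every half-open interval of length $2^{t+1}$ contained in $[2^t, M(t)]$ meets the comb, because consecutive admissible values $2^t q$ differ by exactly $2^{t+1}$. In particular the interval $(y,\, y+2^{t+1}]$ contains a Hadamard order, provided its right endpoint does not overshoot the top of the comb; this is ensured by $y + 2^{t+1} \le 2y \le M(t)$, which holds for large $y$ since $2^{t+1} = o(y)$. Thus the next Hadamard order above $y$ is at most $y + 2^{t+1} = y + O\!\left(y^{\alpha/(1+\alpha)}\right)$ away. Letting $y$ range over Hadamard orders $n_i \le x$ and using $y \le x$ gives $n_{i+1}-n_i = O\!\left(x^{\alpha/(1+\alpha)}\right)$ uniformly.

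The one genuine obstacle is the bookkeeping at the ends of the comb and for small arguments: I must verify that the chosen $t$ really does place $y$ strictly between $2^t$ and $M(t)$ with room for one further step of size $2^{t+1}$, and that the odd-$q$ constraint (which gives spacing $2^{t+1}$ rather than $2^t$) is respected throughout the relevant range $q \le Q(t)$. For the finitely many small $y$ below the threshold where these inequalities take hold, the gaps among the initial known Hadamard orders are bounded by an absolute constant, which is harmlessly absorbed into the $O(\cdot)$. Everything else reduces to the elementary estimate $t(y) \approx \frac{\alpha}{1+\alpha}\log_2 y$ together with the spacing bound $2^{t+1} = O\!\left(y^{\alpha/(1+\alpha)}\right)$.
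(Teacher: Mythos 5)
Your proof is correct and is essentially the paper's argument: both exploit the comb of Hadamard orders $2^t q$ ($q$ odd) with $t$ chosen so that $2^t \asymp n^{\alpha/(1+\alpha)}$, the gap then being bounded by the comb spacing $2^{t+1}$. The paper organizes this by taking $t=\lceil\alpha\log_2 q_1+\beta\rceil$ for each consecutive odd pair $q_0,q_1=q_0+2$ rather than fixing $t$ per target point $y$, but the balancing computation is identical, and if anything your version spells out the covering step more explicitly.
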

\begin{proof}
Consider consecutive odd integers $q_0$, $q_1 = q_0+2$ and corresponding
$n_i = 2^tq_i$, where $t = \lceil\alpha \log_2(q_1) + \beta\rceil$. 
By assumption
there exist Hadamard matrices of orders $n_0$, $n_1$.
Also, $2^\beta q_1^\alpha \le 2^t < 2^{\beta+1}q_1^\alpha$.
Thus \[n_1 = 2^tq_1 \ge 2^\beta q_1^{1+\alpha}\]
and $\;\;\;\;n_1-n_0 
 = 2^{t+1} < 2^{\beta+2}q_1^\alpha
 \le 2^{2+\beta/(1+\alpha)}n_1^{\alpha/(1+\alpha)}
  = O(n_0^{\alpha/(1+\alpha)})$.
\end{proof}

\section{The Schur complement}	\label{sec:Schur}

Let \[\widetilde{A} = \left[\begin{matrix} A & B\\ C & D\\
\end{matrix}\right] \] be an $n \times n$ matrix written in 
block form, where $A$ is $h \times h$, and $n = h + d > h$.  Then
the \emph{Schur complement}~\cite{Schur}
of $A$ in $\widetilde{A}$ is the $d \times d$
matrix 
\[D - CA^{-1}B.\] 
The Schur complement is relevant to our problem due to the following lemma.
\begin{lemma}	\label{lemma:Schur}
If $\widetilde{A}$ is as above, with $A$ nonsingular, then
\[\det(\widetilde{A}) = \det(A)\det(D - CA^{-1}B).\]
\end{lemma}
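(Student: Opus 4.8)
The plan is to exhibit an explicit block factorization of $\widetilde{A}$ as a product of two block-triangular matrices, and then combine multiplicativity of the determinant with the fact that the determinant of a block-triangular matrix is the product of the determinants of its diagonal blocks.

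First I would verify, by direct block multiplication, the identity
\[
\widetilde{A}
 = \left[\begin{matrix} A & 0\\ C & I_d\\ \end{matrix}\right]
   \left[\begin{matrix} I_h & A^{-1}B\\ 0 & D - CA^{-1}B\\ \end{matrix}\right],
\]
where $I_h$ and $I_d$ denote identity matrices of the indicated sizes. This is exactly where the nonsingularity of $A$ is used, since the factorization involves $A^{-1}$. Expanding the product block by block recovers the four blocks $A$, $B$, $C$, $D$; the only nonobvious entry is the bottom-right block, where $C A^{-1} B + (D - C A^{-1} B) = D$.

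Next, taking determinants and applying $\det(XY) = \det(X)\det(Y)$, I would reduce the claim to the block-triangular determinant identity: for square diagonal blocks $P$ and $R$, one has $\det\!\left[\begin{matrix} P & Q\\ 0 & R\\ \end{matrix}\right] = \det(P)\det(R)$, and likewise for block-lower-triangular matrices. Applied to the first factor (diagonal blocks $A$ and $I_d$) this gives determinant $\det(A)$, and applied to the second factor (diagonal blocks $I_h$ and $D - CA^{-1}B$) it gives $\det(D - CA^{-1}B)$. Multiplying the two yields the stated formula.

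The only step requiring genuine justification is the block-triangular determinant identity, which I expect to be the main obstacle, though it is entirely routine. It can be established either by Laplace (cofactor) expansion along the rows or columns meeting the zero block, or via the Leibniz permutation-sum formula, where any nonvanishing term forces each contributing permutation to map the index set of the upper-left block into itself, so the sum over permutations factors into separate sums over the two blocks. Since this is a standard fact, I would most likely cite it rather than reprove it in full.
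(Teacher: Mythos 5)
Your proof is correct and takes essentially the same approach as the paper: a block (Gaussian-elimination) factorization of $\widetilde{A}$ into block-triangular factors, followed by multiplicativity of the determinant and the block-triangular determinant identity. The only cosmetic difference is that you place the block $A$ in the lower-triangular factor, whereas the paper writes $\widetilde{A} = \left[\begin{smallmatrix} I & 0\\ CA^{-1} & I \end{smallmatrix}\right]\left[\begin{smallmatrix} A & B\\ 0 & D-CA^{-1}B \end{smallmatrix}\right]$.
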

\begin{proof}
Using block Gaussian elimination on $\widetilde{A}$ gives
\[
\left[\begin{matrix} A & B\\ C & D\\ \end{matrix}\right] =
\left[\begin{matrix} I & 0\\ CA^{-1} & I\\ \end{matrix}\right]
\left[\begin{matrix} A & B\\ 0 & D-CA^{-1}B\\ \end{matrix}\right]\,.
\]
Now take determinants.
\end{proof}

\section{Notation and auxiliary results}	\label{sec:lemmas}

In this section we define our notation and prove 
some auxiliary results that are needed
in~\S\ref{sec:unconditional}.
As above, $D(n)$ is the maximum determinant function
and $\Dbar(n) := D(n)/n^{n/2}$ is its normalization by
the Hadamard bound $n^{n/2}$.
The set of orders of all Hadamard matrices is denoted by ${\Had}$.

We define $c := \sqrt{2/\pi} \approx 0.7979$.  Other constants are denoted
$c_1$, $c_2$, $\alpha$, $\beta$, etc.
Usually $h\in\Had$ and $n = h+d$,
where $d\ge 0$ (the case $d=0$ is trivial because then the Hadamard bound
applies). We assume
$h\ge 4$ to avoid the cases $h \in \{1,2\}$, although in most cases
it is easy to verify that the results also hold for $h \in \{1,2\}$.

Matrices are denoted by capital letters $A$ etc, and their elements by the
corresponding lower-case letters, e.g. $a_{ij}$ (the comma between
subscripts is omitted if the meaning is clear).
 
When using the probabilistic method, the probability of an event $S$
(which is always a discrete set of possible outcomes of a random process) 
is denoted by ${\rm Pr}(S)$,
and the expectation of a random variable $X$ is denoted by $E(X)$.

\begin{lemma}	\label{lemma:2n_choose_n}
Suppose that $h$ is an even positive integer.Then
\[
\binom{h}{h/2} > 
 \,2^h\, \sqrt{\frac{2}{\pi h}}\left(1 - \frac{1}{4h}\right).
\]
\end{lemma}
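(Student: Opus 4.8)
The plan is to reduce the statement to a sharp lower bound on a central binomial coefficient and then control the relevant sequence to second order. Writing $m := h/2 \ge 1$, we have $\binom{h}{h/2} = \binom{2m}{m}$, $2^h = 4^m$, $\sqrt{2/(\pi h)} = 1/\sqrt{\pi m}$, and $1 - 1/(4h) = 1 - 1/(8m)$, so the claim is equivalent to $\binom{2m}{m} > \frac{4^m}{\sqrt{\pi m}}\bigl(1 - \frac{1}{8m}\bigr)$. Dividing by $4^m$ and using the Wallis-product identity $4^{-m}\binom{2m}{m} = \prod_{k=1}^m \frac{2k-1}{2k} =: P_m$, it suffices to prove $P_m > \frac{1}{\sqrt{\pi m}}\bigl(1 - \frac{1}{8m}\bigr)$, or equivalently, after squaring the (positive) quantities, $v_m > \bigl(1 - \frac{1}{8m}\bigr)^2$ where $v_m := \pi m\, P_m^2$.

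First I would record the clean \emph{rational} recursion $v_{m+1}/v_m = \frac{(2m+1)^2}{4m(m+1)} = 1 + \frac{1}{4m(m+1)}$, so that $v_m$ is strictly increasing, and standard Wallis/Stirling asymptotics give $v_m \to 1$; hence $v_m < 1$ for every finite $m$. Telescoping the tail then gives $1 - v_m = \sum_{k\ge m} \frac{v_k}{4k(k+1)}$, and the crude estimate $v_k < 1$ immediately yields $1 - v_m < \sum_{k \ge m}\frac{1}{4k(k+1)} = \frac{1}{4m}$, i.e. $v_m > 1 - \frac{1}{4m}$.

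The hard part is that the target $\bigl(1-\frac{1}{8m}\bigr)^2 = 1 - \frac1{4m} + \frac1{64m^2}$ agrees with $v_m$ to first order, so the first-pass bound $v_m > 1 - 1/(4m)$ falls short by precisely the $1/m^2$ term; the real work is capturing the second-order correction. The plan is to \emph{bootstrap}: feed the crude bound back into the tail. From $v_j > 1 - 1/(4j)$ for all $j \ge k$ one obtains the matching upper estimate $1 - v_k > \bigl(1 - \frac1{4k}\bigr)\frac{1}{4k}$, hence $v_k < 1 - \frac1{4k} + \frac1{16k^2}$. Substituting this sharper bound into $1 - v_m = \sum_{k\ge m}\frac{v_k}{4k(k+1)}$ and evaluating the auxiliary sum $\sum_{k\ge m}\frac{1}{k^2(k+1)} = \sum_{k\ge m}\frac{1}{k^2} - \frac{1}{m} = \frac{1}{2m^2} + O(m^{-3})$ by telescoping and integral comparison produces $1 - v_m < \frac1{4m} - \frac1{32m^2} + O(m^{-3})$. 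Since $\frac1{32} > \frac1{64}$, this beats the required $\frac1{4m} - \frac1{64m^2}$ with room to spare for all $m$ beyond an explicit threshold, after which I bound the $O(m^{-3})$ term quantitatively; the finitely many remaining small $m$ are checked by hand (for instance $v_1 = \pi/4 \approx 0.785 > 49/64$).

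As an alternative that avoids the asymptotic input $v_m \to 1$, one can argue by induction directly on $\binom{2m}{m}$ using the exact ratio $\binom{2m+2}{m+1}\big/\binom{2m}{m} = (4m+2)/(m+1)$: the inductive step reduces, after squaring and cross-multiplying, to a polynomial inequality in $m$ that holds for $m \ge 1$, with base case $m=1$ verified above. Either route meets the same essential obstacle — the inequality is tight at first order — so in both the key point is to pin down the $1/m^2$ term rather than any single clean closed-form comparison.
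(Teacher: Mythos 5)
Your proposal is correct, but it is a genuinely different (and much more self-contained) argument than the paper's. The paper disposes of this lemma in one line by invoking Stirling's asymptotic expansion of $\ln\Gamma(x)$ with the error bounded by the first omitted term (citing a standard reference for that effective form of Stirling); the whole content is outsourced to that expansion. You instead work with the Wallis quotient $P_m=4^{-m}\binom{2m}{m}=\prod_{k=1}^m\frac{2k-1}{2k}$ and the normalized quantity $v_m=\pi m P_m^2$, for which the exact recursion $v_{m+1}=v_m\bigl(1+\frac{1}{4m(m+1)}\bigr)$ plus the Wallis limit $v_m\to 1$ gives the telescoped identity $1-v_m=\sum_{k\ge m}\frac{v_k}{4k(k+1)}$, and you bootstrap once to capture the $1/m^2$ term. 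Your computations check out: the first pass gives $v_m>1-\frac{1}{4m}$; monotonicity of $v$ then gives $v_k<1-\frac{1}{4k}+\frac{1}{16k^2}$; and feeding this back in, using $\frac{1}{k^2(k+1)}>\frac12\bigl(\frac{1}{k^2}-\frac{1}{(k+1)^2}\bigr)$ to get $\sum_{k\ge m}\frac{1}{k^2(k+1)}>\frac{1}{2m^2}$ and the crude bound $\sum_{k\ge m}\frac{1}{k^3(k+1)}\le\frac{1}{m^3}$, yields $1-v_m<\frac{1}{4m}-\frac{1}{32m^2}+\frac{1}{64m^3}$, which beats the target $\frac{1}{4m}-\frac{1}{64m^2}$ already for $m\ge 2$, with $m=1$ checked directly ($\pi/4>49/64$). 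You correctly identify the delicate point, namely that the inequality is tight at order $1/m$ so the second-order coefficient must be pinned down. What the paper's route buys is brevity and a ready-made error bound; what yours buys is independence from any quantitative form of Stirling's series (only the classical Wallis product is used) and an argument whose constants are fully explicit. Either is acceptable; if you write yours up, make the threshold and the $O(m^{-3})$ bound explicit as above rather than leaving them asymptotic.
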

\begin{proof}
This follows from Stirling's asymptotic expansion of $\ln\Gamma(x)$ with
the error bounded by the first term omitted,
see for example~\cite[eqn.~(4.38)]{rpb226}.
\end{proof}
\begin{lemma}	\label{lemma:g_bound2}
Let $g(h) := 1 + 2^{-h}{h}\binom{h}{h/2}$,
where $h \ge 4$ is an even integer.
Then $g(h) > ch^{1/2} + 1 - ch^{-1/2}/4$ and
$g(h) > ch^{1/2} + 0.9$, where $c = \sqrt{2/\pi}$.
\end{lemma}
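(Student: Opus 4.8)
The plan is to derive both inequalities directly from Lemma~\ref{lemma:2n_choose_n}, which already bounds the central binomial coefficient from below. First I would substitute the bound
\[
\binom{h}{h/2} > 2^h \sqrt{\frac{2}{\pi h}}\left(1 - \frac{1}{4h}\right)
\]
into the definition of $g(h)$. Since $c = \sqrt{2/\pi}$, the factor $2^{-h}\binom{h}{h/2}$ exceeds $c\,h^{-1/2}(1 - 1/(4h))$, and multiplying by $h$ gives $2^{-h} h \binom{h}{h/2} > c\,h^{1/2} - c\,h^{-1/2}/4$. Adding the constant $1$ then yields the first claimed inequality $g(h) > c\,h^{1/2} + 1 - c\,h^{-1/2}/4$ immediately.

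For the second inequality I would show that the first bound is no weaker than $c\,h^{1/2} + 0.9$; equivalently, that $1 - c\,h^{-1/2}/4 \ge 0.9$, i.e.\ that $c/(4\sqrt{h}) \le 0.1$. Because $h \ge 4$ we have $\sqrt{h} \ge 2$, so $c/(4\sqrt{h}) \le c/8$. Since $c = \sqrt{2/\pi} \approx 0.798$, we get $c/8 \approx 0.0997 < 0.1$, which establishes the required inequality; the left-hand side only decreases as $h$ grows, so the bound holds for all even $h \ge 4$.

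The argument is essentially a one-line substitution followed by a numerical check, so there is no real obstacle. The only point requiring a moment's care is confirming that the threshold $c/8 < 0.1$ holds at the smallest admissible value $h = 4$; since $c/8 = \sqrt{2/\pi}/8$ is strictly below $0.1$, the constant $0.9$ is safely attained (indeed the value of $1 - c\,h^{-1/2}/4$ at $h=4$ is slightly above $0.9$, so there is a small amount of slack).
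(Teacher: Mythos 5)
Your argument is correct and coincides with the paper's proof: the first inequality is a direct substitution of Lemma~\ref{lemma:2n_choose_n} into the definition of $g(h)$, and the second follows from checking $c h^{-1/2}/4 \le c/8 < 0.1$ for $h \ge 4$. No gaps.
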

\begin{proof}
The first inequality follows from Lemma~\ref{lemma:2n_choose_n}.
{From} the condition $h \ge 4$, we have $ch^{-1/2}/4 < 1/10$.
Thus $g(h) > ch^{1/2} + 1 - 0.1 = ch^{1/2} + 0.9$.
\end{proof}
\pagebreak[3]

\noindent Lemma~\ref{lemma:ineq1} is from~\cite[Lemma~4]{rpb249},
and Lemma~\ref{lemma:uncond2} is similar.
\begin{lemma} \label{lemma:ineq1}
If $\alpha \in \R$, $n \in \N$, $n > |\alpha| > 0$, and $h = n - \alpha$, then
\[\frac{h^h}{n^n} >
\left(\frac{1}{ne}\right)^\alpha\,\raisedot\]
\end{lemma}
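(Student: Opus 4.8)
The plan is to take logarithms and reduce the claimed multiplicative inequality to a one-variable convexity statement. Writing $\alpha = n - h$, the inequality $h^h/n^n > (ne)^{-\alpha}$ is equivalent, after taking logarithms, to
\[
h\ln h - n\ln n > -\alpha(\ln n + 1).
\]
Expanding the right-hand side using $\alpha = n - h$ and cancelling the $\pm\,n\ln n$ terms collapses this to the much simpler
\[
h\ln\frac{h}{n} + \alpha > 0.
\]
This elimination of the mixed $n\ln n$ terms is the step that makes the whole statement transparent, and I would carry it out first.

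Next I would homogenize by dividing through by $n$ and setting $u := h/n$. Since $n > |\alpha|$ and $h = n - \alpha$, we have $h > 0$ and $u \in (0,2)$ with $u \neq 1$ (the excluded value $u = 1$ corresponds precisely to $\alpha = 0$, which the hypothesis $|\alpha| > 0$ forbids). In these terms the target becomes
\[
u\ln u > u - 1.
\]
This is exactly the tangent-line inequality for the strictly convex function $x \mapsto x\ln x$ at $x = 1$: its value and derivative there are $0$ and $1$, so $x\ln x > x-1$ for $x \neq 1$. I would justify this either by that convexity/tangent-line observation or, equivalently, by setting $f(u) := u\ln u - u + 1$, noting $f(1) = 0$, $f'(u) = \ln u$, and $f''(u) = 1/u > 0$, so that $u = 1$ is the unique global minimum and $f(u) > 0$ for $u \neq 1$.

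There is no serious obstacle here; the argument is routine once the logarithmic reduction is performed. The only points requiring a little care are (i) keeping the inequality strict, which holds because $\alpha = 0$ (equivalently $u = 1$) is the unique equality case and is excluded by hypothesis, and (ii) checking that the single convexity bound simultaneously handles both signs of $\alpha$, i.e.\ both $u < 1$ and $u > 1$, so that no separate case analysis for negative $\alpha$ is needed.
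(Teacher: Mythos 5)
Your proof is correct and follows essentially the same route as the paper: both reduce, after taking logarithms and normalizing by $n$, to the one-variable inequality $(1-x)\ln(1-x)+x>0$ for $x=\alpha/n$ (your $u\ln u>u-1$ with $u=1-x$). The only difference is the final step, where the paper verifies this via the power series $\sum_{k\ge 2}x^k/((k-1)k)$ with an alternating-series argument for $x<0$, while you use the tangent-line/convexity argument for $x\mapsto x\ln x$ at $x=1$; both are valid and handle the strictness and both signs of $\alpha$ correctly.
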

\begin{proof}
Taking logarithms, and writing $x = \alpha/n$,
the inequality reduces to
\begin{equation}	\label{eq:x_ineq}
(1-x)\ln(1-x) + x > 0,
\end{equation}
or equivalently (since $0 < |x| < 1$)
\[\frac{x^2}{1\cdot 2} + \frac{x^3}{2\cdot 3} + \frac{x^4}{3\cdot 4} +
\cdots > 0.\]
This is clear if $x>0$, and also if $x < 0$ because then the terms alternate
in sign and decrease in magnitude.
\end{proof}

\begin{lemma} \label{lemma:uncond2}
If $\alpha \in \R$, $n \in \N$, $n > |\alpha| > 0$, and $h = n - \alpha$, then
\[(h/n)^n > \exp(-\alpha - \alpha^2/h).\]
\end{lemma}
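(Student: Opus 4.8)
The plan is to proceed exactly as in the proof of Lemma~\ref{lemma:ineq1}: take logarithms, make the substitution $x = \alpha/n$, and reduce the claim to the positivity of a single one-variable function on $(-1,0)\cup(0,1)$. Taking logarithms, the desired inequality $(h/n)^n > \exp(-\alpha - \alpha^2/h)$ is equivalent to $n\ln(h/n) > -\alpha - \alpha^2/h$. Writing $x = \alpha/n$ gives $h/n = 1-x$ with $0 < |x| < 1$, and since $\alpha = nx$ and $h = n(1-x)$ we have $\alpha^2/h = nx^2/(1-x)$. Dividing by $n>0$, the claim becomes
\[
g(x) := \ln(1-x) + x + \frac{x^2}{1-x} > 0 \qquad (0 < |x| < 1).
\]

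First I would record that $g(0) = 0$, so it suffices to control the sign of $g$ via its derivative. A short computation gives a pleasant collapse:
\[
g'(x) = \frac{-1}{1-x} + 1 + \frac{2x - x^2}{(1-x)^2} = \frac{x}{(1-x)^2}.
\]
Hence $g'(x) > 0$ for $0 < x < 1$ and $g'(x) < 0$ for $-1 < x < 0$. Combined with $g(0) = 0$, this shows $g$ is strictly increasing to the right of $0$ and strictly decreasing to the left of $0$, so in either case $g(x) > g(0) = 0$ for $0 < |x| < 1$, which is exactly what we want.

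The only place requiring care --- and the analogue of the ``terms alternate in sign'' remark in Lemma~\ref{lemma:ineq1} --- is the case $\alpha < 0$, i.e. $x < 0$, where the naive expansion of $\ln(1-x)$ has mixed signs and positivity is not immediate term by term. The derivative identity $g'(x) = x/(1-x)^2$ handles both signs of $x$ uniformly, and deriving it is the crux of the argument; everything else is the same bookkeeping as in Lemma~\ref{lemma:ineq1}. Alternatively, one can expand $g(x) = \sum_{k \ge 2} \frac{k-1}{k}\,x^k$, which is manifestly positive for $x>0$ but needs an alternating-series estimate for $x<0$; since the derivative route dispatches $x<0$ more transparently, I would present that.
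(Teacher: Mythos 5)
Your proof is correct and follows essentially the same route as the paper: after taking logarithms and substituting $x=\alpha/n$, your target inequality $g(x)=\ln(1-x)+x+\tfrac{x^2}{1-x}>0$ is precisely the inequality $\ln(1-x)>-x/(1-x)$ that the paper uses, since $x+\tfrac{x^2}{1-x}=\tfrac{x}{1-x}$. The only (minor) difference is that the paper obtains this by dividing the already-proved inequality~\eqref{eq:x_ineq} from Lemma~\ref{lemma:ineq1} by $1-x>0$, whereas you re-derive it from scratch via the clean identity $g'(x)=x/(1-x)^2$; both verifications are valid.
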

\begin{proof}
Taking $x = \alpha/n$, the inequality~\eqref{eq:x_ineq} proved above implies
that\\ $\ln(1-x) > -x/(1-x)$, so
\[(1-x)^n \,>\, \exp\left(-\frac{nx}{1-x}\right)\,.\]
Since $1-x = h/n$, we obtain
\[\left(\frac{h}{n}\right)^n  
 \,>\, \exp\left(- \frac{\alpha}{1-\alpha/n}\right) 
  = \exp(-\alpha - \alpha^2/h).\]
\end{proof}

\begin{lemma}	\label{lemma:u_sum}
Let $A \in \{\pm1\}^{h\times h}$ be a Hadamard matrix,
$C \in \{\pm1\}^{d\times h}$, and $U = CA^{-1}$.  
Then, for each $i$ with $1 \le i \le d$,
\[\sum_{j=1}^h u_{ij}^2 = 1.\]
\end{lemma}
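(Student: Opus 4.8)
The plan is to exploit the defining property of a Hadamard matrix, namely that its rows (equivalently its columns) are mutually orthogonal and each has squared Euclidean norm equal to $h$. This means $AA^T = hI_h$, so that $A^{-1} = A^T/h$. First I would substitute this into the definition $U = CA^{-1}$ to obtain $U = CA^T/h$, which gives an explicit formula for each entry $u_{ij}$ in terms of the known $\pm1$ entries of $C$ and $A$.

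Next I would compute the quantity $\sum_{j=1}^h u_{ij}^2$ by recognizing it as the $(i,i)$ diagonal entry of the matrix $UU^T$. Using $U = CA^T/h$ we have
\[
UU^T = \frac{1}{h^2}\, C A^T A\, C^T = \frac{1}{h^2}\, C (h I_h) C^T = \frac{1}{h} C C^T,
\]
where I have again used the Hadamard property in the form $A^T A = h I_h$. The $(i,i)$ entry of $\frac{1}{h}CC^T$ is $\frac{1}{h}\sum_{j=1}^h c_{ij}^2$. Since every entry $c_{ij} \in \{\pm 1\}$ satisfies $c_{ij}^2 = 1$, this sum is $\frac{1}{h}\cdot h = 1$, giving the claimed identity.

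This argument is essentially computational and I do not anticipate a genuine obstacle; the only point requiring care is to keep straight which orthogonality relation is being invoked at each stage. The key fact is that for a Hadamard matrix both $AA^T = hI_h$ and $A^TA = hI_h$ hold, so $A^{-1} = A^T/h$ and the factor $A^T A$ collapses cleanly to $h I_h$. An alternative and perhaps more transparent route avoids forming $UU^T$ altogether: the $i$-th row of $U$, call it $u_i$, is the unique solution of $u_i A = c_i$ (where $c_i$ is the $i$-th row of $C$), so $u_i = c_i A^{-1} = c_i A^T / h$. Then
\[
\sum_{j=1}^h u_{ij}^2 = \|u_i\|^2 = u_i u_i^T = \frac{1}{h^2}\, c_i A^T A\, c_i^T = \frac{1}{h}\, c_i c_i^T = \frac{1}{h}\sum_{j=1}^h c_{ij}^2 = 1,
\]
since $c_i$ is a $\pm1$ vector of length $h$. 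Either presentation yields the result with no difficult step.
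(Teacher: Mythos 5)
Your proof is correct and follows essentially the same route as the paper's: use $A^TA = hI$ to get $UU^T = h^{-1}CC^T$, then read off the diagonal entries using $c_{ij}^2 = 1$. The paper's version is just a terser statement of the same computation, so there is nothing to add.
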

\begin{proof}
Since $A$ is Hadamard, $A^TA = hI$. Thus
$UU^T = 
h^{-1}CC^T$.
Since $c_{ij} = \pm 1$, ${\rm diag}(CC^T) = hI$.
Thus ${\rm diag}(UU^T) = I$.
\end{proof}
\begin{definition}	\label{defn:DD}
If $A \in \R^{d\times d}$ satisfies
$|a_{ij}| \le \varepsilon |a_{ii}|$
for all $i \ne j$, then
we say that $A$ is \emph{${\rm DD}(\varepsilon)$}.
{\rm (Here ``DD'' stands for ``diagonally dominant''.)}
\end{definition}
\begin{lemma}	\label{lemma:optimal_pert_bound}
If $A = I - E \in \R^{d\times d}$, 
$|e_{ij}| \le \varepsilon$ for $1 \le i, j \le d$,
and $d\varepsilon \le 1$, then 
\[\det(A) \ge 1 - d\varepsilon.\]
\end{lemma}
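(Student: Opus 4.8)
The plan is to pass from the entrywise bound on $E$ to a bound on its eigenvalues, and then to control $\det(I-E)$ through the series for $\log\det(I-E)$. Let $\lambda_1,\dots,\lambda_d$ denote the eigenvalues of $E$ (counted with algebraic multiplicity). First I would bound their moduli by Schur's inequality, which says that the sum of squared eigenvalue moduli is at most the squared Frobenius norm:
\[
\sum_i |\lambda_i|^2 \;\le\; \sum_{i,j} e_{ij}^2 \;\le\; d^2\varepsilon^2 .
\]
Hence each $|\lambda_i|\le d\varepsilon$ and $\sum_i|\lambda_i|^2 \le (d\varepsilon)^2$. Since $d\varepsilon\le 1$, every eigenvalue $1-\lambda_i$ of $I-E$ then has positive real part, so $\det(I-E)>0$ and a real principal logarithm is available.

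The key identity is $\log\det(I-E) = -\sum_{k\ge 1} \mathrm{tr}(E^k)/k$, valid when the spectral radius of $E$ is less than $1$. I would first treat the case $d\varepsilon<1$ and recover the boundary case $d\varepsilon=1$ at the end (there $\det(I-E)\ge 0 = 1-d\varepsilon$ directly, by pairing conjugate eigenvalues, or by continuity). The heart of the argument is the uniform estimate $|\mathrm{tr}(E^k)|\le (d\varepsilon)^k$ for every $k\ge1$. For $k=1$ this is immediate from $|\mathrm{tr}(E)| = |\sum_i e_{ii}|\le d\varepsilon$; for $k\ge2$ it follows from $|\mathrm{tr}(E^k)| = |\sum_i \lambda_i^k| \le \sum_i |\lambda_i|^k \le (\max_i|\lambda_i|)^{k-2}\sum_i|\lambda_i|^2 \le (d\varepsilon)^k$, using the two eigenvalue bounds just obtained.

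Granting this, each summand satisfies $-\mathrm{tr}(E^k)/k \ge -(d\varepsilon)^k/k$, so summing over $k$ gives
\[
\log\det(I-E) \;\ge\; -\sum_{k\ge1}\frac{(d\varepsilon)^k}{k} \;=\; \log(1-d\varepsilon),
\]
and exponentiating yields $\det(I-E)\ge 1-d\varepsilon$. The bound is sharp: taking all entries of $E$ equal to $\varepsilon$ gives eigenvalues $d\varepsilon, 0,\dots,0$ and hence $\det(I-E)=1-d\varepsilon$ exactly, which explains the name of the lemma.

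The main obstacle is that the most natural estimates fail precisely at the boundary $d\varepsilon=1$: bounding the determinant term-by-term through its permutation expansion, or through the elementary symmetric functions of the $\lambda_i$, loses too much because the resulting series has no geometric decay when $d\varepsilon$ is close to $1$, and at best recovers a weaker Gerschgorin-type bound such as $(1-d\varepsilon)^d$. The device that overcomes this is the passage to $\log\det$: the logarithmic weights $1/k$ make the dominating series $\sum_k (d\varepsilon)^k/k$ telescope to exactly $-\log(1-d\varepsilon)$, so no slack is introduced and the sharp constant $1-d\varepsilon$ emerges. The only remaining care is convergence of the series (guaranteed by $d\varepsilon<1$) and the boundary case $d\varepsilon=1$, both of which are routine.
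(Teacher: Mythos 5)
Your proof is correct and has the same skeleton as the paper's: both write $\ln\det(I-E)=-\sum_{k\ge1}\mathrm{Tr}(E^k)/k$ for $d\varepsilon<1$, bound each trace in absolute value by $(d\varepsilon)^k$ so that the dominating series sums exactly to $-\ln(1-d\varepsilon)$, and recover the boundary case $d\varepsilon=1$ by a limiting argument. The only substantive difference is how the trace bound is justified: the paper argues that $\mathrm{Tr}(E^k)$, being a sum of $d^k$ monomials each of modulus at most $\varepsilon^k$, is maximized entrywise at $E=\varepsilon\,ee^T$, where it equals $(d\varepsilon)^k$; you instead invoke Schur's inequality $\sum_i|\lambda_i|^2\le\sum_{i,j}e_{ij}^2\le(d\varepsilon)^2$ to localise the eigenvalues and then estimate $|\mathrm{Tr}(E^k)|\le\bigl(\max_i|\lambda_i|\bigr)^{k-2}\sum_i|\lambda_i|^2\le(d\varepsilon)^k$ for $k\ge2$, with $k=1$ immediate from the diagonal entries. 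Both devices yield the sharp constant; your spectral route also gives $\rho(E)\le d\varepsilon<1$ directly, which simultaneously justifies the convergence of the series and the positivity of $\det(I-E)$ (via conjugate pairing) without the paper's appeal to Gerschgorin plus continuity, while the paper's extremal argument has the small advantage of identifying $E=\varepsilon\,ee^T$ as the unique tight case.
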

\begin{proof}
We first assume that $d\varepsilon < 1$.
Thus, by Gerschgorin's theorem,
$A$ is nonsingular. 
Hence by continuity $\det(A) > 0$. 
Thus, $\ln\det(A)$ is well-defined and real.  
Write the eigenvalues of $X \in \R^{d\times d}$ as $\lambda_i(X) \in \C$,
and define the \emph{trace}
${\rm Tr}(X) := \sum_i x_{ii} = \sum_i \lambda_i(X)$.
Then
\[\ln\det(A) = \ln \left(\prod_{i=1}^d \lambda_i(A)\right)
   = {\rm Tr}(\ln(A)),
\]
where 
\[\ln(A) = \ln(I-E) = -\sum_{k=1}^\infty \frac{1}{k}E^k\,.\]
Thus
\[\ln\det(A) 
 = -{\rm Tr}\left(\sum_{k=1}^\infty\frac{1}{k}E^k\right)
 = - \sum_{k=1}^\infty \frac{1}{k}{\rm Tr}(E^k)\,.\]
Considering this series term by term,
it is clear that ${\rm Tr}(E^k)$ attains its maximum value, subject to
the constraints $|e_{ij}| \le \varepsilon$, when each $e_{ij} = \varepsilon$,
that is when $E = E_1 := \varepsilon\, ee^T$,
where $e^T := (1, 1,\ldots, 1)$ is the $d$-vector of all ones.
Using $e^Te = d$, 
it is easy to prove, by induction on $k$, that
$E_1^k = (d\varepsilon)^{k-1}E_1$ for all $k \ge 1$.
Thus ${\rm Tr}(E_1^k) = (d\varepsilon)^{k-1}{\rm Tr}(E_1) =
 (d\varepsilon)^{k}$.
So we have
\[\ln\det(A) \ge -\sum_{k=1}^\infty \frac {(d\varepsilon)^k}{k}
 = \ln(1 - d\varepsilon)\,,\]
and it follows that
$\det(A) \ge 1 - d\varepsilon$.
This completes the proof for $d\varepsilon < 1$. 
If $d\varepsilon = 1$
then $\det(A) \ge 0$ by a continuity argument.
\end{proof}
\begin{remark}
{\rm
It is easy to show, using a rank-$1$ updating formula, that
\[\det(I - \varepsilon\,ee^T) = 1 - d\varepsilon\,.\] 
Thus, the inequality of Lemma~\ref{lemma:optimal_pert_bound} 
is best possible. 
One may see from
the proof of Lemma~\ref{lemma:optimal_pert_bound}
that if $\varepsilon > 0$
then tightness occurs only for $E=\varepsilon\,ee^T$.
In this unique extreme case, the eigenvalues
of $A = I - E$ 
are $1-d\varepsilon$ (with multiplicity $1$)
and $1$ (with multiplicity $d-1$).
}
\end{remark}
\begin{remark}
{\rm
Gerschgorin's theorem gives $|\lambda_i(A) - 1| \le d\varepsilon$,
but this only implies the much weaker inequality
$\det(A) \ge (1 - d\varepsilon)^d$.
}
\end{remark}
If, in addition to the conditions of Lemma~\ref{lemma:optimal_pert_bound},
we assume that $e_{ii} = 0$, then in the extreme case the eigenvalues
of $A$ are all shifted up by $\varepsilon$. Thus we obtain the 
following lemma. The proof is omitted since it is similar to the proof of
Lemma~\ref{lemma:optimal_pert_bound}.
\begin{lemma}	\label{lemma:optimal_pert_bound2}
If $A = I - E \in \R^{d\times d}$, 
$|e_{ij}| \le \varepsilon$ for $1 \le i, j \le d$,
$e_{ii} = 0$ for $1 \le i \le d$,
and $(d-1)\varepsilon \le 1$, then 
\[\det(A) \ge \left(1 - (d-1)\varepsilon\right)(1+\varepsilon)^{d-1}.
\]
\end{lemma}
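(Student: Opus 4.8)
The plan is to follow the proof of Lemma~\ref{lemma:optimal_pert_bound} almost verbatim, changing only the extremal matrix against which we compare. As there, I would first treat the strict case $(d-1)\varepsilon < 1$ and recover the boundary case $(d-1)\varepsilon = 1$ at the end by a continuity argument. Under the constraint $e_{ii}=0$, Gerschgorin's theorem applied to $A = I - E$ places every eigenvalue of $A$ within distance $(d-1)\varepsilon < 1$ of $1$, so $A$ is nonsingular and, by continuity along the segment from $E=0$, $\det(A) > 0$. Hence $\ln\det(A)$ is well-defined and real, and as in the earlier proof I would write
\[\ln\det(A) = {\rm Tr}(\ln(I-E)) = -\sum_{k=1}^\infty \frac{1}{k}\,{\rm Tr}(E^k),\]
the series converging because the spectral radius of $E$ is less than $1$.

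The key step is again a term-by-term maximization of ${\rm Tr}(E^k)$, but now subject to the extra constraint $e_{ii}=0$. Expanding ${\rm Tr}(E^k)$ as a sum over closed walks $i_1 \to i_2 \to \cdots \to i_k \to i_1$, every term containing a factor $e_{i_j i_j}$ vanishes, and each surviving term is a product of $k$ off-diagonal entries, hence is bounded above by $\varepsilon^k$. All surviving terms attain this bound simultaneously when every off-diagonal entry equals $\varepsilon$; that is, ${\rm Tr}(E^k)$ is maximized at $E = E_1 := \varepsilon(ee^T - I)$, where $e^T = (1,\ldots,1)$. Thus ${\rm Tr}(E^k) \le {\rm Tr}(E_1^k)$ for every $k \ge 1$, whence
\[\ln\det(A) \ge -\sum_{k=1}^\infty \frac{1}{k}\,{\rm Tr}(E_1^k) = \ln\det(I - E_1).\]

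It then remains to evaluate $\det(I - E_1)$ explicitly. Since $ee^T$ is the $d\times d$ all-ones matrix, with eigenvalues $d$ (once) and $0$ (multiplicity $d-1$), the matrix $E_1 = \varepsilon(ee^T - I)$ has eigenvalues $(d-1)\varepsilon$ (once) and $-\varepsilon$ (multiplicity $d-1$). Consequently $I - E_1$ has eigenvalues $1 - (d-1)\varepsilon$ (once) and $1 + \varepsilon$ (multiplicity $d-1$), giving
\[\det(I - E_1) = \bigl(1 - (d-1)\varepsilon\bigr)(1+\varepsilon)^{d-1},\]
which is exactly the claimed lower bound. This matches the remark preceding the lemma: relative to Lemma~\ref{lemma:optimal_pert_bound}, forcing $e_{ii}=0$ shifts every eigenvalue of the extremal $A$ up by $\varepsilon$.

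The one point requiring care — the main obstacle, such as it is — is the term-by-term maximization, where one must check that no sign cancellation among the closed-walk products can push ${\rm Tr}(E^k)$ above ${\rm Tr}(E_1^k)$. This is clean here because each individual walk-product is bounded by $\varepsilon^k$ in absolute value, and the choice $E = E_1$ makes every surviving product equal to $+\varepsilon^k$, so the bound is achieved for all terms at once. Everything else is a direct transcription of the previous proof, together with the elementary eigenvalue computation for the matrix $E_1$.
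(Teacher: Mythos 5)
Your proof is correct and is exactly the argument the paper intends: the authors omit the proof of Lemma~\ref{lemma:optimal_pert_bound2} precisely because it is the trace--logarithm argument of Lemma~\ref{lemma:optimal_pert_bound} with the extremal matrix replaced by $E_1=\varepsilon(ee^T-I)$, whose eigenvalue computation gives $(1-(d-1)\varepsilon)(1+\varepsilon)^{d-1}$. Your term-by-term bound on ${\rm Tr}(E^k)$ via closed walks avoiding diagonal entries, and the continuity handling of the boundary case $(d-1)\varepsilon=1$, correctly fill in the details the paper leaves to the reader.
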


The following lemma, which may be of independent interest,
gives a lower bound on the determinant of a
diagonally dominant matrix.
\begin{lemma}	\label{lemma:nice_pert_bound}
If $A \in \R^{d\times d}$ is ${\rm DD}(\varepsilon)$, then
\[|\det(A)| \ge \left(\prod_{i=1}^d |a_{ii}|\right)
 \left(1-(d-1)^2\varepsilon^2\right).\]
\end{lemma}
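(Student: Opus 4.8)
The plan is to normalize $A$ by its diagonal and thereby reduce the claim to the sharper bound already recorded in Lemma~\ref{lemma:optimal_pert_bound2}. First I would dispose of the degenerate case: if some $a_{ii}=0$, then the ${\rm DD}(\varepsilon)$ hypothesis of Definition~\ref{defn:DD} forces $|a_{ij}|\le\varepsilon|a_{ii}|=0$ for every $j$, so row $i$ vanishes, $\det(A)=0$, and the right-hand side vanishes too. So I would assume every $a_{ii}\ne0$ and set $D:={\rm diag}(a_{11},\ldots,a_{dd})$ and $B:=D^{-1}A$. Then $b_{ii}=1$ and $|b_{ij}|=|a_{ij}|/|a_{ii}|\le\varepsilon$ for $i\ne j$, so $B=I-E$ with $e_{ii}=0$ and $|e_{ij}|\le\varepsilon$. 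Since $|\det(A)|=|\det(D)|\,|\det(B)|=\bigl(\prod_{i=1}^d|a_{ii}|\bigr)|\det(B)|$, it remains only to prove $|\det(B)|\ge1-(d-1)^2\varepsilon^2$.

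Next I would split on the size of $(d-1)\varepsilon$. If $(d-1)\varepsilon\ge1$ then $1-(d-1)^2\varepsilon^2\le0\le|\det(B)|$ and there is nothing to prove, so I would assume $(d-1)\varepsilon<1$. Now $B$ satisfies exactly the hypotheses of Lemma~\ref{lemma:optimal_pert_bound2}, namely $B=I-E$ with $e_{ii}=0$, $|e_{ij}|\le\varepsilon$, and $(d-1)\varepsilon\le1$; that lemma therefore gives
\[
\det(B)\ \ge\ \bigl(1-(d-1)\varepsilon\bigr)(1+\varepsilon)^{d-1}\ >\ 0.
\]
In particular $\det(B)>0$, so $|\det(B)|=\det(B)$ and it suffices to bound this right-hand side from below.

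Finally, the remaining inequality collapses to a one-line estimate. Writing $1-(d-1)^2\varepsilon^2=\bigl(1-(d-1)\varepsilon\bigr)\bigl(1+(d-1)\varepsilon\bigr)$ and recalling that the factor $1-(d-1)\varepsilon$ is positive, it is enough to show $(1+\varepsilon)^{d-1}\ge1+(d-1)\varepsilon$, which is precisely Bernoulli's inequality. Multiplying through by $1-(d-1)\varepsilon>0$ then yields $\det(B)\ge1-(d-1)^2\varepsilon^2$, and combining with the first paragraph completes the proof.

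I expect the only genuine subtlety to be the recognition that a direct attack is too weak: expanding $\det(B)$ over permutations and bounding termwise gives $|\det(B)-1|\le\sum_{\sigma}\varepsilon^{m(\sigma)}$, where the sum is over permutations $\sigma\ne\mathrm{id}$ of the $d$ indices and $m(\sigma)\ge2$ counts the indices that $\sigma$ moves; for $d\ge4$ this bound can exceed $(d-1)^2\varepsilon^2$ near $\varepsilon=1/(d-1)$, so it does \emph{not} establish the claim. The cancellation among permutation signs that is needed here is exactly what the trace/eigenvalue argument behind Lemma~\ref{lemma:optimal_pert_bound} and Lemma~\ref{lemma:optimal_pert_bound2} already supplies; once that sharper bound is invoked, nothing beyond Bernoulli's inequality is required.
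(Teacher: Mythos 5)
Your proof is correct and follows essentially the same route as the paper's: dispose of the degenerate cases, normalize by the diagonal ($SA$ with $S={\rm diag}(a_{ii}^{-1})$), invoke Lemma~\ref{lemma:optimal_pert_bound2}, and finish with $(1+\varepsilon)^{d-1}\ge 1+(d-1)\varepsilon$. The closing remarks about why a termwise permutation expansion is too weak are apt but not needed for the argument.
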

\begin{proof}
If $\varepsilon < 0$ then $A=0$ and the result is trivial;
if $(d-1)\varepsilon \ge 1$ then the inequality is trivial as the right
side is not positive. 
Hence, assume that $0 \le (d-1)\varepsilon < 1$.
If any $a_{ii} = 0$ then the result is trivial. Otherwise,
apply Lemma~\ref{lemma:optimal_pert_bound2} to $SA$, 
where $S = {\rm diag}(a_{ii}^{-1})$.
Since $\det(A) = \det(SA)\prod_i a_{ii}$ and
\[(1-(d-1)\varepsilon)(1+\varepsilon)^{d-1} \ge
 (1-(d-1)\varepsilon)(1 + (d-1)\varepsilon) =
 1 - (d-1)^2\varepsilon^2,\]
the corollary follows. 
\end{proof}

\begin{remark}
{\rm
Lemma~\ref{lemma:nice_pert_bound} is much sharper than the bound
\[ |\det(A)| \ge \left(\prod_{i=1}^d |a_{ii}|\right)
   \left(1-(d-1)\varepsilon\right)^d \]
that follows from Gerschgorin's theorem.
For example, if $a_{ii} = 1$ for $1\le i\le d$ and $(d-1)\varepsilon = 1/2$,
then Lemma~\ref{lemma:nice_pert_bound} gives the lower bound $3/4$
whereas Gerschgorin's theorem gives $2^{-d}$.
}
\end{remark}

\begin{lemma}	\label{lemma:exp_ineq}
If $\kappa, \varepsilon_0 \in \R$, 
$\varepsilon_0 > 0$, $|\kappa \varepsilon_0| < 1$, then
$1+\kappa \varepsilon \ge \exp(\beta\varepsilon)$ for all
$\varepsilon\in[0,\varepsilon_0]$, where
\[\beta = \frac{\ln(1+\kappa \varepsilon_0)}{\varepsilon_0}\,\raisedot\]
\end{lemma}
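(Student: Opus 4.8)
The plan is to take logarithms and exploit the concavity of the logarithm. First I would verify that $1+\kappa\varepsilon > 0$ throughout the interval $[0,\varepsilon_0]$: for such $\varepsilon$ we have $|\kappa\varepsilon| \le |\kappa\varepsilon_0| < 1$, so $1+\kappa\varepsilon \ge 1 - |\kappa\varepsilon_0| > 0$. Consequently $\ln(1+\kappa\varepsilon)$ is well-defined and real, and since $\exp$ is strictly increasing, the asserted inequality $1+\kappa\varepsilon \ge \exp(\beta\varepsilon)$ is equivalent to
\[
\ln(1+\kappa\varepsilon) \;\ge\; \beta\varepsilon \qquad (0 \le \varepsilon \le \varepsilon_0).
\]

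Next I would set $f(\varepsilon) := \ln(1+\kappa\varepsilon)$ and observe that $f$ is concave on $[0,\varepsilon_0]$, since $f''(\varepsilon) = -\kappa^2/(1+\kappa\varepsilon)^2 \le 0$. The key point is that the linear function $\varepsilon \mapsto \beta\varepsilon$ is exactly the chord of $f$ joining the two endpoints of the interval. Indeed, at $\varepsilon = 0$ both $f(0)$ and $\beta\cdot 0$ vanish, while at $\varepsilon = \varepsilon_0$ the definition $\beta = \ln(1+\kappa\varepsilon_0)/\varepsilon_0$ gives $\beta\varepsilon_0 = \ln(1+\kappa\varepsilon_0) = f(\varepsilon_0)$. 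Since a concave function lies on or above each of its chords, we obtain $f(\varepsilon) \ge \beta\varepsilon$ for every $\varepsilon \in [0,\varepsilon_0]$. Exponentiating yields the claim.

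There is essentially no serious obstacle here; the lemma is a routine consequence of concavity. The only point demanding any care is ensuring $1+\kappa\varepsilon > 0$ so that the logarithm is real-valued, which is precisely what the hypothesis $|\kappa\varepsilon_0| < 1$ supplies. I note that the argument is uniform in the sign of $\kappa$, since the concavity of $f$ holds regardless of whether $\kappa$ is positive or negative, so no case analysis on $\sign(\kappa)$ is needed.
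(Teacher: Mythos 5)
Your proof is correct and is essentially the paper's argument: the paper notes that $\exp(\beta\varepsilon)$ is convex and agrees with the line $1+\kappa\varepsilon$ at $\varepsilon=0$ and $\varepsilon=\varepsilon_0$, hence lies below it in between, while you apply the logarithm and use concavity of $\ln(1+\kappa\varepsilon)$ against its chord $\beta\varepsilon$ --- the same endpoint-plus-convexity idea transported through the monotone map $\exp$. Your explicit check that $1+\kappa\varepsilon>0$ on the whole interval is a welcome detail the paper leaves implicit.
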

\begin{proof}
This follows from the concave-up 
nature of $\exp(K\varepsilon)$, and the fact that
$1+\kappa \varepsilon = \exp(\beta\varepsilon)$ 
at the two endpoints $\varepsilon=0$ and $\varepsilon=\varepsilon_0$.
\end{proof}

The following lemma is essentially
Erd\H{o}s and Spencer~\cite[Lemma 15.2]{ES}, so
we omit the (straightforward) proof.
\begin{lemma}	\label{lemma:ES15.2}
If $X\in[0,1]$ is a random variable with $E(X) = \mu$,
then for $\lambda < \mu$ we have
\[{\rm Pr}(X \ge \lambda) \ge \frac{\mu - \lambda}{1 - \lambda}\,\raisedot\]
\end{lemma}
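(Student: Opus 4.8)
The plan is to give a short ``reverse Markov'' argument: split the expectation according to whether $X$ meets the threshold $\lambda$, and bound each piece crudely using the fact that $X$ is confined to $[0,1]$. Write $p := {\rm Pr}(X \ge \lambda)$, so that ${\rm Pr}(X < \lambda) = 1-p$. Conditioning on these two complementary events gives
\[
\mu = E(X) = E(X \mid X \ge \lambda)\,p + E(X \mid X < \lambda)(1-p).
\]

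Next I would bound each conditional expectation using the range constraint. On the event $\{X \ge \lambda\}$ I use the upper bound $X \le 1$, and on the event $\{X < \lambda\}$ I use $X \le \lambda$, which together yield
\[
\mu \le 1 \cdot p + \lambda(1-p) = \lambda + (1-\lambda)p.
\]
Rearranging gives $(1-\lambda)p \ge \mu - \lambda$. Since the hypothesis $\lambda < \mu$ combined with $X \le 1$ (hence $\mu \le 1$) forces $\lambda < 1$, I may divide by the positive quantity $1-\lambda$ to obtain the claimed bound ${\rm Pr}(X\ge\lambda) = p \ge (\mu-\lambda)/(1-\lambda)$.

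There is essentially no obstacle here; the only points requiring a moment's care are the directions of the two crude bounds---on each event I replace $X$ by a value at least as large ($1$ or $\lambda$ respectively), so both replacements can only \emph{increase} the integrand and hence give a valid upper bound on $\mu$---together with the verification that $1-\lambda>0$, so that the final division preserves the direction of the inequality. Note that the lower constraint $X \ge 0$ is not even needed for this direction of the bound.
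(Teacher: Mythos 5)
Your argument is correct: splitting $\mu = E(X)$ over the events $\{X \ge \lambda\}$ and $\{X < \lambda\}$, bounding $X$ by $1$ and by $\lambda$ respectively, and rearranging is exactly the standard ``reverse Markov'' proof of this inequality. The paper itself omits the proof, citing Erd\H{o}s and Spencer, Lemma~15.2, and your argument is precisely the ``straightforward'' one being alluded to, so there is nothing to compare beyond noting that your observation that $X \ge 0$ is never used (only $X \le 1$) is accurate.
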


We now state a two-sided version of Hoeffding's ``tail inequality.''
For a proof, see~\cite[Theorem 2]{Hoeffding}.
\begin{proposition} 	\label{prop:Hoeffding}
Let $X_1,\ldots,X_h$ be independent random variables
with sum
$Y = X_1 + \cdots + X_h$. 
Assume that
$X_i \in [a_i, b_i]$.
Then, for all $t > 0$,
\[
{\rm Pr}\left(|Y - E[Y]| \ge t\right) \;\le\; 
 2\,\exp\left(\frac{-2t^2}{\sum_{i=1}^h (b_i-a_i)^2} \right)\,\raisedot
\]

\end{proposition}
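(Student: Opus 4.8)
The plan is to reduce the two-sided statement to a one-sided tail bound, which I would then establish by the exponential Markov (Chernoff) technique. Fix $t>0$. For any $s>0$, applying Markov's inequality to the nonnegative random variable $e^{s(Y-E[Y])}$ gives
\[
{\rm Pr}\left(Y - E[Y] \ge t\right) = {\rm Pr}\left(e^{s(Y-E[Y])} \ge e^{st}\right) \le e^{-st}\,E\!\left[e^{s(Y-E[Y])}\right].
\]
Writing $Z_i := X_i - E[X_i]$ and using the independence of the $X_i$ (hence of the $Z_i$), the moment generating function factors as
\[
E\!\left[e^{s(Y-E[Y])}\right] = \prod_{i=1}^h E\!\left[e^{sZ_i}\right],
\]
so it suffices to bound each single-variable factor.

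The main obstacle will be the per-variable bound, namely Hoeffding's lemma: if $Z\in[a,b]$ with $E[Z]=0$, then $E[e^{sZ}] \le \exp\!\big(s^2(b-a)^2/8\big)$. I would prove this as follows. By convexity of $x\mapsto e^{sx}$ on $[a,b]$ we have $e^{sx} \le \frac{b-x}{b-a}e^{sa} + \frac{x-a}{b-a}e^{sb}$; taking expectations and using $E[Z]=0$ annihilates the term linear in $x$, leaving $E[e^{sZ}] \le (1-p)e^{sa} + p\,e^{sb}$ with $p := -a/(b-a)$. Substituting $u := s(b-a)$ (so $sa=-pu$ and $sb=(1-p)u$) rewrites this upper bound as $e^{\phi(u)}$, where $\phi(u) := -pu + \ln(1-p+p\,e^u)$. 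The crux is then a second-derivative estimate: a direct computation gives $\phi(0)=\phi'(0)=0$ and
\[
\phi''(u) = \frac{p(1-p)e^u}{(1-p+p\,e^u)^2} = \theta(1-\theta) \le \tfrac14,
\]
where $\theta := p\,e^u/(1-p+p\,e^u)\in[0,1]$ and the final bound uses $\theta(1-\theta)\le 1/4$. Taylor's theorem then yields $\phi(u) \le u^2/8 = s^2(b-a)^2/8$, which is the lemma.

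Combining the lemma with the factorization gives $E[e^{s(Y-E[Y])}] \le \exp\!\big(\tfrac{s^2}{8}\sum_{i=1}^h (b_i-a_i)^2\big)$, and hence
\[
{\rm Pr}\left(Y - E[Y] \ge t\right) \le \exp\!\left(-st + \tfrac{s^2}{8}\sum_{i=1}^h (b_i-a_i)^2\right).
\]
The free parameter $s$ is now at my disposal: the exponent is a quadratic in $s$ minimized at $s = 4t/\sum_{i=1}^h (b_i-a_i)^2 > 0$, and this choice produces the bound $\exp\!\big(-2t^2/\sum_{i=1}^h(b_i-a_i)^2\big)$. Finally I would symmetrize: running the identical argument on the variables $-X_i$ (which lie in $[-b_i,-a_i]$, an interval of the same width $b_i-a_i$) bounds ${\rm Pr}(Y-E[Y] \le -t)$ by the same quantity, and since the two tail events are disjoint, summing them contributes the factor $2$ and completes the proof. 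The only genuinely delicate point is the estimate $\phi''(u)\le 1/4$; everything else is bookkeeping.
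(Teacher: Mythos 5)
Your argument is correct and complete: the Chernoff--Markov reduction, the convexity step yielding Hoeffding's lemma via the estimate $\phi''(u)=\theta(1-\theta)\le\tfrac14$, the optimization $s=4t/\sum_i(b_i-a_i)^2$, and the symmetrization to $-X_i$ are all carried out accurately. The paper itself gives no proof of this proposition---it simply cites Hoeffding's Theorem~2---and what you have written is precisely the standard (essentially Hoeffding's original) argument for that result, so there is nothing to fault beyond the trivial degenerate case $a_i=b_i$, which costs nothing.
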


\pagebreak[3]
\section{Lower bounds on $D(n)$ and $\Dbar(n)$} \label{sec:unconditional}

In this section we prove several lower bounds on $D(n)$ and $\Dbar(n)$,
where $n = h+d$ and $h$ is the order of a Hadamard matrix.
Theorem~\ref{thm:unconditional_bound} applies when $h \ge h_0(d)$ is 
sufficiently large. If we assume the Hadamard conjecture, then 
we can drop the ``sufficiently large'' restriction (see
Corollary~\ref{cor:small_d}).

If the Hadamard conjecture is false then it is sometimes necessary
to take \hbox{$d\ge 4$}.  In this case 
Theorems~\ref{thm:unconditional_bound2}
and~\ref{thm:unconditional_bound3}
are preferable as they impose weaker restrictions on $h$
than does Theorem~\ref{thm:unconditional_bound},
at the cost of a slight weakening of the lower bound on $D(n)$.
The proofs of Theorems~\ref{thm:unconditional_bound2}
and~\ref{thm:unconditional_bound3}
use Lemma~\ref{lemma:nice_pert_bound} and 
Proposition~\ref{prop:Hoeffding},
which are not needed for the proof of Theorem~\ref{thm:unconditional_bound}.

\begin{theorem}	\label{thm:unconditional_bound}
If $d \ge 1$, $h \in {\Had}$, 
$n = h+d$, and 
\begin{equation}	\label{eq:h_0}
h \ge h_0(d) := \left(e(\pi/2)^{d/2} (d-1)! + {d}\right)^2,
\end{equation}
then
\begin{equation}	\label{eq:first_uncond_bd}
\frac{D(n)}{h^{h/2}} > \left(\frac{2n}{\pi}\right)^{d/2}.
\end{equation}
\end{theorem}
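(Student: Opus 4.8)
The plan is to use the bordering construction together with the Schur-complement identity (Lemma~\ref{lemma:Schur}) to reduce the theorem to a statement about a single $d\times d$ random matrix. Starting from an $h\times h$ Hadamard matrix $A$, I form the $n\times n$ sign matrix $\widetilde{A}$ with blocks $A,B,C,D$ as in \S\ref{sec:Schur}, where $B\in\{\pm1\}^{h\times d}$ is filled by independent fair coin tosses and $C\in\{\pm1\}^{d\times h}$, $D\in\{\pm1\}^{d\times d}$ are chosen deterministically as functions of~$B$. Since $|\det A| = h^{h/2}$, Lemma~\ref{lemma:Schur} gives $|\det\widetilde{A}| = h^{h/2}\,|\det S|$ with $S = D - CA^{-1}B$, so $D(n)/h^{h/2}\ge|\det S|$ and it suffices to exhibit one $B$ for which $|\det S| > (2n/\pi)^{d/2}$. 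I will do this by the probabilistic method: compute $E[\det S]$ and then invoke $|\det S|\ge|E[\det S]|$ for some outcome.

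Writing $r_i = A^{-1}b_i = h^{-1}A^{T}b_i$ for the $i$-th column $b_i$ of $B$, the entries of $M := CA^{-1}B$ are $M_{ij}=c_i^{T}r_j$, where $c_i^{T}$ is the $i$-th row of $C$. The two deterministic choices are: (i) take $c_i=\sign(r_i)$, which maximizes the diagonal and yields $M_{ii}=\|r_i\|_1$; and (ii) set $d_{ii}=-1$, so $S_{ii}=-1-\|r_i\|_1$ and $|S_{ii}|=1+\|r_i\|_1$. The key elementary point is that each coordinate of $r_i$ is $h^{-1}$ times a $\pm1$ random-walk sum of length $h$; hence $\|r_i\|_1$ is an average of $h$ such absolute walks, so $E\|r_i\|_1 = 2^{-h}h\binom{h}{h/2}$ and therefore $E|S_{ii}| = g(h)$. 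Lemmas~\ref{lemma:2n_choose_n} and~\ref{lemma:g_bound2} then give $E|S_{ii}| = g(h) > c\,h^{1/2}+0.9$, so each diagonal entry has expected magnitude essentially $\sqrt{2h/\pi}\approx\sqrt{2n/\pi}$, and the $+0.9$ provides the slack that will beat the loss in passing from $h$ to $n$.

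Because $S_{ii}$ depends only on $b_i$ and distinct columns of $B$ are independent, the diagonal entries $S_{11},\dots,S_{dd}$ are mutually independent, giving $E\!\left[\prod_i S_{ii}\right]=(-1)^d g(h)^d$, of the target order $(2n/\pi)^{d/2}$. To pass from the diagonal to the whole determinant I expand $\det S$ around its mean: write $S=\bar S+Z$, where $\bar S=E[S]$ has $\bar S_{ii}=-g(h)$ and off-diagonal entries $\bar S_{ij}=d_{ij}$ (using $E[M_{ij}]=c_i^{T}A^{-1}E[b_j]=0$ for $i\ne j$), and $Z=S-\bar S$ is mean-zero. Expanding $\det(\bar S+Z)$ multilinearly and taking expectations, every term containing exactly one factor from $Z$ vanishes, so $E[\det S]=\det\bar S+(\text{terms with }\ge 2\text{ factors from }Z)$. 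The deterministic part $\det\bar S=\det(-g(h)I+(\text{off-diagonal }\pm1))$ is bounded below by $g(h)^d\bigl(1-(d-1)^2/g(h)^2\bigr)$ via Lemma~\ref{lemma:optimal_pert_bound2} applied to $I-E$ with $\varepsilon=1/g(h)$.

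The main obstacle is controlling the remaining correction terms, those with at least two factors from $Z$. Here the useful estimates are that each off-diagonal fluctuation has the small second moment $E[M_{ij}^2]=c_i^{T}(A^{T}A)^{-1}c_i=\|c_i\|^2/h=1$, and the diagonal fluctuations $\|r_i\|_1-E\|r_i\|_1$ have $O(1)$ variance; consequently each correction term is smaller than the main term $g(h)^d$ by a factor decaying like a power of $1/h$. Bounding the full (combinatorial) sum of these terms crudely is exactly what forces the rapidly growing threshold $h\ge h_0(d)$. Once the corrections are shown to be dominated, $|E[\det S]|\ge(2n/\pi)^{d/2}$, the probabilistic method produces an admissible $B$, and the theorem follows. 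This first-moment treatment of the off-diagonal part is precisely what Theorems~\ref{thm:unconditional_bound2} and~\ref{thm:unconditional_bound3} replace by Proposition~\ref{prop:Hoeffding} together with Lemma~\ref{lemma:nice_pert_bound}, trading a more intricate argument for a much weaker hypothesis on~$h$.
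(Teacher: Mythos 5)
Your construction and overall strategy are essentially the paper's: random $B$, $C=\sign(B^TA)$, $d_{ii}=-1$, a first-moment argument on the determinant of the Schur complement, $E|S_{ii}|=g(h)$ via Best's computation, independence of the diagonal entries across columns of $B$, and Cauchy--Schwarz for the off-diagonal second moments. One organizational difference is harmless but costs you something: you fix the off-diagonal entries of $D$ in advance and absorb them into $\bar S$, paying a factor $\bigl(1-(d-1)^2/g(h)^2\bigr)$ via Lemma~\ref{lemma:optimal_pert_bound2}. The paper instead works with $\det(F+I)$ (off-diagonals of $D$ effectively zero), finds a good $B$, and only \emph{then} chooses each $d_{ij}\in\{\pm1\}$ greedily, using the fact that the determinant is an affine function of each entry separately, so that $\det(F-D)\ge\det(F+I)$ with no loss at all. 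Your variant can be made to work, but it adds another $O(d^2/h)$ loss that must be re-absorbed into the slack coming from the $+0.9$ in $g(h)>ch^{1/2}+0.9$, and since the theorem asserts the conclusion for the \emph{explicit} threshold $h_0(d)$, that bookkeeping has to be carried out against that value rather than deferred.

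The genuine gap is in your control of the correction terms with at least two $Z$-factors. Pairwise second moments ($E[M_{ij}^2]=1$) bound the expectation of a product of \emph{two} off-diagonal fluctuations by Cauchy--Schwarz, but a permutation term can contain three or more such factors (e.g.\ a $3$-cycle contributes $M_{21}M_{32}M_{13}$), and these factors are dependent, so second moments alone give no bound on the expected product. The missing ingredient is Best's deterministic bound $0\le f_{ii}\le h^{1/2}$ and $|f_{ij}|\le h^{1/2}$ (Cauchy--Schwarz applied to the defining inner products), which you never state: the paper bounds all but two factors of each non-diagonal permutation term in sup norm by $h^{1/2}+1$ and applies $E|f_{ij}f_{k\ell}|\le 1$ to exactly one off-diagonal pair, giving $(h^{1/2}+1)^{d-2}$ per term and hence the stated $h_0(d)$. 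Relatedly, your claim that the diagonal fluctuations $\|r_i\|_1-E\|r_i\|_1$ have $O(1)$ variance is both unproven (it requires controlling $\sum_{k\ne k'}\mathrm{Cov}(|W_k|,|W_{k'}|)$ over $h^2$ correlated pairs, where $W_k=(A^Tb_i)_k$) and unnecessary --- the paper's argument uses only $E(f_{ii})$, independence across $i$, and the sup bound. Replace the variance claim by the sup bounds and the single Cauchy--Schwarz application, and your outline becomes the paper's proof.
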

\begin{proof}
Let $A$ be a Hadamard matrix of order $h\ge 4$.  
We add a border of $d$ rows
and columns to give a larger matrix $\widetilde{A}$ of order $n$. The 
border is defined by matrices $B$, $C$ and $D$ as in \S\ref{sec:Schur}.
The matrices
$A$, $B$, $C$, and $D$ all have entries drawn from $\{\pm1\}$.
We show that a suitable choice of $B$, $C$ and $D$ gives a matrix
$D - CA^{-1}B$ with sufficiently large determinant that the result can
be deduced from Lemma~\ref{lemma:Schur}.

Define $M = F-D$, where $F = CA^{-1}B$.  Thus $-M$ is the 
Schur complement of $A$ in $\widetilde{A}$.
Note that, since $A$ is a Hadamard matrix, $A^T = hA^{-1}$.

Following Best's approach, $B$ is allowed to range over the set
$S(h,d)$ of all $h\times d$ $\{\pm 1\}$-matrices. We give a lower
bound on the mean value \hbox{$\mu := E(\det(M))$} and deduce that a matrix
$B$ exists for which $\det(M) \ge \mu$.  
We use $E(\cdots)$ to denote a mean value over all possible choices
of $B \in S(h,d)$, unless the mean value over some subset of $S(h,d)$
is specified.
  
The $d \times h$ matrix 
$C = (c_{ij})$ depends on $B$. We choose
\[c_{ij} = \sign (B^T A)_{ij}\,,\]
where  
\[\sign(x) := \begin{cases} +1 \text{ if } x \ge 0,\\
			   -1 \text{ if } x < 0.\\
	     \end{cases}
\]
\noindent
[\emph{Remark}. 
The choice of $C$ ensures that there is no cancellation in the inner
products defining the diagonal entries of $hF = C\cdot (A^T B)$. Thus, we
expect the diagonal entries $f_{ii}$ of $F$ to be nonnegative and 
of order $h^{1/2}$,
but the off-diagonal entries $f_{ij}$ ($i\ne j$) 
to be of order unity with high probability.]

Best~\cite[Theorem 1]{Best} shows\footnote{
In~\cite[footnote on pg.~68]{ES} this result is attributed to J.~H.~Lindsey.
The upper bound can
be achieved infinitely often, in fact
whenever a regular Hadamard matrix of order $h$ exists.
For example, this is true if $h = 4q^2$, 
where $q$ is an odd prime power
and $q \not\equiv 7 \pmod 8$, see~\cite{XXS}.},
\comment{Also,
J.-M. Goethals and J. J. Seidel [Canad. J. Math. 22 (1970), 597–614;
MR0282872 (44 \#106)] showed that ``if there is an Hadamard matrix of
order $n$ then there exists a regular symmetric Hadamard matrix with
constant diagonal of order $n^2$.''
However, the construction does not always give a maxdet matrix,
e.g. taking $h=36$ it gives the ``old record'' of $63×9^{17}×2^{36}$
and not the optimal value of $72×9^{17}×2^{36}$ which can be obtained
by the Orrick-Solomon $3$-normalised construction.
Similarly for $h=100$.
} 
using the Cauchy-Schwarz inequality, 
that $0 \le f_{ii} \le h^{1/2}$, and it follows
similarly that $|f_{ij}| \le h^{1/2}$.

We take $D = (d_{ij})_{1 \le i, j \le d}$ to be a $d \times d$ matrix
with diagonal entries $d_{ii} = -1 $ and off-diagonal entries
to be specified later.

Let $g(h)$ be as in Lemma~\ref{lemma:g_bound2}.
Observe that
\[
E(f_{ij}) = \begin{cases} g(h)-1 \text{ if } i = j,\\
			     0 \text{ otherwise,}\\
             \end{cases}
\]
where the case $i=j$ follows from Best~\cite[Theorem~3]{Best}.
We now show that
\begin{equation} 	\label{eq:Efij2}
E(f_{ij}^2) = 1 \; \text{ if } \; i \ne j.
\end{equation} 
To prove this, assume without essential loss of generality that 
$i=1$, $j > 1$.  
Write $F = UB$, where $U = CA^{-1} = h^{-1}CA^T$.
Now \[f_{1j} = \sum_k u_{1k}b_{kj},\]
where
\[u_{1k} = \frac{1}{h}\sum_{\ell} c_{1\ell}a_{k\ell}\]
and
\[c_{1\ell} = {\rm sgn}\left(\sum_m b_{m1}a_{m\ell}\right).\]
Observe that $c_{1\ell}$ and $u_{1k}$ depend only on the first column of $B$.
Thus, $f_{1j}$ depends only on the first and $j$-th columns of $B$.
If we fix the first column of $B$ and take expectations over all choices
of the other columns, we obtain
\[E(f_{1j}^2) = E\left(\sum_k\sum_{\ell}u_{1k}u_{1\ell}b_{kj}b_{\ell j}\right).\]
The expectation of the terms with $k\ne \ell$ vanishes,
and the expectation of the terms with $k=\ell$
is $\sum_k u_{1k}^2$.
Thus, \eqref{eq:Efij2} follows from Lemma~\ref{lemma:u_sum}.

Now suppose that $i \ne j$, $k \ne \ell$. 
We cannot 
assume that $f_{ij}$
and $f_{k\ell}$ are independent\footnote{For example,
$f_{12}$ and $f_{21}$ are
not independent. Since $f_{ij}$ depends on columns $i$ and $j$ of
$B$, we see that $f_{ij}$ and $f_{k\ell}$ are independent iff
$\{i,j\}\cap\{k,\ell\} = \emptyset$.}. 
However, from the Cauchy-Schwarz inequality, we have
\begin{equation}	\label{eq:E_fijfkl}
E(|f_{ij}f_{k\ell}|) \le \sqrt{E(f_{ij}^2)E(f_{k\ell}^2)} = 1.
\end{equation}
Since $f_{ii}$ depends only on the $i$-th column of $B$,
the ``diagonal'' terms $f_{ii}$ are independent;
similarly the diagonal terms $m_{ii}$ are independent.
Now $E(m_{ii}) = E(f_{ii}) - d_{ii} = g(h)$ by our choice $d_{ii} = -1$, so
\[
E\left(\prod_{i=1}^d m_{ii}\right) = \prod_{i=1}^d E(m_{ii}) = g(h)^d.
\]

Observe that $\det(F+I)$ is the sum of a ``diagonal''
term $\prod_{1 \le i \le d}m_{ii}$ and \hbox{$(d!-1)$}
``non-diagonal'' terms.  
If $d>1$, the non-diagonal
terms each contain at most \hbox{$d-2$} factors of the form $m_{ii}$
(bounded by $h^{1/2}+1$)
and at least two factors of the form $f_{ij}$. 
The expectations of the non-diagonal terms are bounded by 
$(h^{1/2}+1)^{d-2}$.
For example, if $d=3$, we use
\[|E(f_{12}f_{21}m_{33})| \le E(|f_{12}f_{21}|)\max(|m_{33}|) \le
h^{1/2}+1.\]
In general, we use an upper bound $h^{1/2}+1$ for $d-2$ of the factors,
and save a factor of order $h$ by using~\eqref{eq:E_fijfkl} once.

Thus
\begin{equation}	\label{eq:better_ineq}
E(\det(F+I)) \ge g(h)^d - (d!-1)\,(h^{1/2}+1)^{d-2}.
\end{equation}
We simplify~\eqref{eq:better_ineq} using
$h^{1/2}+1 \le h^{1/2}\exp(h^{-1/2})$ and, from~\eqref{eq:h_0}, $d < h^{1/2}$.
Thus
$(h^{1/2}+1)^{d-2} \le h^{d/2-1}\exp(dh^{-1/2})
 \le h^{d/2-1}e$, and~\eqref{eq:better_ineq} gives 
\begin{equation}	\label{eq:better_ineq2}
E(\det(F+I)) \ge g(h)^d - d!\,h^{d/2-1}e.
\end{equation}
Now, using Lemma~\ref{lemma:g_bound2} gives
\begin{equation}	\label{eq:alpha}
E(\det(F+I)) > (ch^{1/2} + 0.9)^d - d!\,h^{d/2-1}e
	\ge c^dh^{d/2}\left(1 + \frac{0.9d}{ch^{1/2}} -
		\frac{d!e}{c^d h}\right)\,.
\end{equation}
We also have
\begin{equation}	\label{eq:beta}
\left(\frac{h}{n}\right)^{d/2} =\; 
  \left(1+\frac{d}{h}\right)^{-d/2} \! >\; 
   \exp\left(-\frac{d^2}{2h}\right)\raisedot
\end{equation}
Now $h \ge h_0(d)$ implies both $d^2 < h$ and
$dh^{1/2} \ge d!e/c^d + d^2$; since $c < 0.9$ the latter
inequality implies 
\begin{equation}	\label{eq:gamma}
\frac{0.9d}{ch^{1/2}} > \frac{d!e}{c^dh} + \frac{d^2}{h}\,\raisedot
\end{equation}
{From} $d^2 \le h$ and the inequalities~\eqref{eq:alpha}--\eqref{eq:gamma},
we have
\begin{eqnarray*}
E(\det(F+I)) &>& c^dn^{d/2}
	\left(1 + \frac{0.9d}{ch^{1/2}} - \frac{d!e}{c^d h}\right)
		\exp\left(-\frac{d^2}{2h}\right)\\
&>& c^dn^{d/2}\left(1 + \frac{d^2}{h}\right)
	\exp\left(-\frac{d^2}{2h}\right) > c^dn^{d/2}.
\end{eqnarray*}
This proves the existence of matrices
$B$ and $C$ such that
$\det(F+I) > c^dn^{d/2}$.

To complete the proof,
we choose the off-diagonal elements of $D$, in an arbitrary order, in such a
manner that $\det(F-D) \ge \det(F+I)$. 
This is always possible, since $\det(F-D)$ is
a linear function of each off-diagonal element $d_{ij}$ considered
separately, so at least one of the choices $d_{ij} = +1$ and $d_{ij} = -1$ 
does not reduce the determinant. 
The inequality~\eqref{eq:first_uncond_bd} 
now follows from Lemma~\ref{lemma:Schur}.
\end{proof}
\begin{remark}	\label{remark:complexandconf}
{\rm 
A variant of Theorem~\ref{thm:unconditional_bound}
arises if we start, not from
an $h \times h$ Hadamard matrix, but from an 
$h \times h$ {\it conference} matrix\footnote{Similarly
for weighing matrices~\cite{Craigen-weighing}, which are also scalar
multiples of orthogonal matrices.}, 
that is a $\{0,\pm1\}$-matrix $C$, with ${\rm diag}(C) = 0$,
satisfying $CC^T = (h-1)I$. To prove the variant, we need only minor
alterations to Lemma~\ref{lemma:u_sum} and to the proof of 
Theorem~\ref{thm:unconditional_bound}.
Using this variant, we can improve the constant\footnote{This constant
occurs in the statement that the Ehlich upper bound~\cite{Ehlich64b}
for $D(n)$ in the case $n \equiv 3 \bmod 4$ is attained
\emph{up to a constant factor} infinitely often.}
in Theorem~C of Neubauer and
Radcliffe~\cite{NR} from $0.3409$ to $0.4484$.
Another interesting variant allows all
matrices to have entries from the set $\{\pm 1, \pm i\}$;
then a $4$-sided ``coin'' and $4$-valued
``sign'' function need to be used.
}
\end{remark}
\begin{corollary}	\label{cor:small_d}
If $1\le d\le 3$, $h \in {\Had}$, $h \ge 4$, and $n = h+d$, then
\begin{equation}	\label{eq:D_nh_ineq}
\frac{D(n)}{h^{h/2}} > \left(\frac{2n}{\pi}\right)^{d/2}
\end{equation}
and
\begin{equation}	\label{eq:D_nh_ineq2}
\Dbar(n) > \left(\frac{2}{\pi e}\right)^{d/2}.
\end{equation}
\end{corollary}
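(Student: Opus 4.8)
\textbf{Proof proposal for Corollary~\ref{cor:small_d}.}
The plan is to derive Corollary~\ref{cor:small_d} from Theorem~\ref{thm:unconditional_bound} for the range of $h$ where the theorem applies, and then to dispose of the finitely many remaining (small) values of $h\in\Had$ by a separate argument. The key observation is that~\eqref{eq:D_nh_ineq} is \emph{exactly} the conclusion~\eqref{eq:first_uncond_bd} of the theorem, so for each fixed $d\in\{1,2,3\}$ it holds automatically once $h\ge h_0(d)$. Thus the only real work is to verify~\eqref{eq:D_nh_ineq} for the finitely many $h\in\Had$ with $4\le h<h_0(d)$. I would handle this by noting that the theorem's proof only used the hypothesis $h\ge h_0(d)$ through the two consequences $d^2<h$ (equivalently $d<h^{1/2}$) and the inequality~\eqref{eq:gamma}; for $d\le 3$ these are mild numerical conditions, so I would first check whether the chain~\eqref{eq:alpha}--\eqref{eq:gamma} in fact survives for all $h\ge 4$ when $d\le 3$, or else identify the small exceptional $h$ and treat them directly.

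Once~\eqref{eq:D_nh_ineq} is in hand for all $h\ge 4$ with $d\le 3$, the second inequality~\eqref{eq:D_nh_ineq2} follows by a short computation. First I would rewrite
\[
\Dbar(n)=\frac{D(n)}{n^{n/2}}
 =\frac{D(n)}{h^{h/2}}\cdot\frac{h^{h/2}}{n^{n/2}}
 >\left(\frac{2n}{\pi}\right)^{d/2}\cdot\frac{h^{h/2}}{n^{n/2}},
\]
using~\eqref{eq:D_nh_ineq}. The remaining factor $h^{h/2}/n^{n/2}$ is exactly the quantity bounded below in Lemma~\ref{lemma:ineq1} with $\alpha=d$ and the roles of $n,h$ as there (since $h=n-d$): that lemma gives $h^h/n^n>(1/(ne))^{d}$, hence $h^{h/2}/n^{n/2}>(ne)^{-d/2}$. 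Combining,
\[
\Dbar(n)>\left(\frac{2n}{\pi}\right)^{d/2}(ne)^{-d/2}
 =\left(\frac{2n}{\pi\, ne}\right)^{d/2}
 =\left(\frac{2}{\pi e}\right)^{d/2},
\]
which is precisely~\eqref{eq:D_nh_ineq2}. The cancellation of the factor $n$ is the point of pairing the Hadamard-bound slack $(2n/\pi)^{d/2}$ against Lemma~\ref{lemma:ineq1}, and it is what makes the final bound depend on $d$ alone.

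The main obstacle I anticipate is the verification of~\eqref{eq:D_nh_ineq} for the small values $4\le h<h_0(d)$, since $h_0(d)=\bigl(e(\pi/2)^{d/2}(d-1)!+d\bigr)^2$ can be moderately large even for $d=3$. I would resolve this either by sharpening the numerical estimates in the theorem's proof specifically for $d\le 3$ (the constants there were chosen for general $d$ and are likely slack when $d$ is tiny), thereby lowering the effective threshold to $h=4$, or, failing that, by a finite check over the relevant $h\in\Had$. A subtlety to watch is the lower boundary $h=4$: the proof of Theorem~\ref{thm:unconditional_bound} assumes $h\ge 4$ and invokes Lemma~\ref{lemma:g_bound2}, which itself requires $h\ge 4$ even, so the corollary's hypothesis $h\ge 4$ is exactly compatible and no genuinely new small-case analysis beyond the threshold reduction should be needed.
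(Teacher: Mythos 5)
Your overall strategy is the paper's: invoke Theorem~\ref{thm:unconditional_bound} for $h\ge h_0(d)$, treat the remaining small $h$ separately, and deduce \eqref{eq:D_nh_ineq2} from \eqref{eq:D_nh_ineq} via Lemma~\ref{lemma:ineq1} with $\alpha=d$; your computation of that last step is exactly what the paper does and is correct. The gap is in your treatment of the small cases $4\le h<h_0(d)$. You offer two alternatives --- sharpen the estimates in the theorem's proof so that the threshold drops all the way to $h=4$, or, failing that, perform ``a finite check over the relevant $h\in\Had$'' --- and neither works on its own. The estimates genuinely fail at the smallest orders: for $h=12$, $d=3$, the intermediate inequality \eqref{eq:better_ineq} gives $E(\det(F+I))\ge g(12)^3-5(\sqrt{12}+1)\approx 28.6$, which is already below the target $(2/\pi)^{3/2}\,15^{3/2}\approx 29.5$, and for $h=4$, $d=3$ the bound gives about $0.6$ against a target of about $9.4$. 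So no tightening of the constants in the chain \eqref{eq:alpha}--\eqref{eq:gamma} (all of which sit downstream of \eqref{eq:better_ineq}) can reach $h=4$. Conversely, a bare finite check over all $h\in\Had$ with $h<h_0(3)\approx 188$ would require lower bounds on $D(n)$ for $n$ up to roughly $190$, a range in which the maximal determinants are not known.

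The paper closes this by combining the two ideas in a way your proposal does not quite articulate: one verifies numerically that \eqref{eq:better_ineq} suffices for all $h\ge 16$ when $d\le 3$, which shrinks the exceptional set to $h\in\{4,8,12\}$, i.e.\ $n\in\{5,\dots,15\}$; for those the exact values $D(5),\dots,D(15)$ are known from the literature and are checked against \eqref{eq:D_nh_ineq} directly. The missing ingredient in your plan is therefore the external input of the known small maximal determinants; with that added, your argument matches the paper's.
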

\begin{proof}
First consider the inequality~\eqref{eq:D_nh_ineq}.
This follows from Theorem~\ref{thm:unconditional_bound} if 
$h \ge h_0(d)$.  
The inequality~\eqref{eq:better_ineq} in the
proof of Theorem~\ref{thm:unconditional_bound} covers
all cases with $h \ge 16$ and $d \le 3$, so
we only need check the cases $h \in \{4,8,12\}$
and use the known values (see for example~\cite{Orrick-www}) 
of $D(5),\ldots,D(15)$.

The inequality~\eqref{eq:D_nh_ineq2}
follows from~\eqref{eq:D_nh_ineq} and
Lemma~\ref{lemma:ineq1} (with $\alpha = d$).
\end{proof}
\begin{remark}	\label{remark:assume_H}
{\rm 
If the Hadamard conjecture is true, then for 
$4 < n \not\equiv 0 \pmod 4$, we can take
$h = 4\lfloor n/4\rfloor$ and $d = n-h \le 3$ in
Corollary~$\ref{cor:small_d}$. Thus, 
\[1 > \Dbar(n) > 
 \left(\frac{2}{\pi e}\right)^{d/2} \!\!\ge\;
 \left(\frac{2}{\pi e}\right)^{3/2} \!>\; 0.1133\,.\]
}
\end{remark}
The following corollary does not assume the Hadamard conjecture,
but it does require $h$ to be sufficiently large.
\begin{corollary}	\label{cor:cor1}
Assume that  $d>0$, $h \in {\Had}$, and $h \ge h_0(d)$, where $h_0(d)$ is as in
Theorem~$\ref{thm:unconditional_bound}$.
If $n = h+d$, then
\[\Dbar(n) > \left(\frac{2}{\pi e}\right)^{d/2}.\]
\end{corollary}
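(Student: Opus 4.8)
The plan is to deduce Corollary~\ref{cor:cor1} directly from Theorem~\ref{thm:unconditional_bound} together with Lemma~\ref{lemma:ineq1}, in exactly the manner used to derive the inequality~\eqref{eq:D_nh_ineq2} in Corollary~\ref{cor:small_d}. The sole difference is that the restriction $1 \le d \le 3$ appearing there is here replaced by the hypothesis $h \ge h_0(d)$, which is precisely the hypothesis of Theorem~\ref{thm:unconditional_bound}; consequently no small cases need to be examined by hand, and the argument is shorter.

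First I would apply Theorem~\ref{thm:unconditional_bound}. Since $d > 0$, $h \in \Had$, $h \ge h_0(d)$, and $n = h + d$, the theorem gives
\[\frac{D(n)}{h^{h/2}} > \left(\frac{2n}{\pi}\right)^{d/2}.\]
Next I would bound the ratio $h^{h/2}/n^{n/2}$ from below using Lemma~\ref{lemma:ineq1} with $\alpha = d$. Its hypotheses require $n > |\alpha| > 0$; as $n = h + d > d > 0$ (using $h \ge h_0(d) \ge 1$), these hold, and the lemma yields $h^h/n^n > (ne)^{-d}$, hence
\[\frac{h^{h/2}}{n^{n/2}} > \left(\frac{1}{ne}\right)^{d/2}.\]

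Finally I would combine the two estimates by writing $\Dbar(n) = (D(n)/h^{h/2})(h^{h/2}/n^{n/2})$ and multiplying, so that the factors of $n$ cancel:
\[\Dbar(n) > \left(\frac{2n}{\pi}\right)^{d/2}\left(\frac{1}{ne}\right)^{d/2} = \left(\frac{2}{\pi e}\right)^{d/2}.\]
I expect no real obstacle here: the corollary is a routine combination of a previously established theorem and lemma, and the only step meriting brief care is the verification of the (easily checked) hypotheses of Lemma~\ref{lemma:ineq1}.
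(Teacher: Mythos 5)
Your proposal is correct and matches the paper's own proof, which likewise deduces the corollary from Theorem~\ref{thm:unconditional_bound} combined with Lemma~\ref{lemma:ineq1} applied with $\alpha = d$. The only difference is that you spell out the routine verification of hypotheses and the cancellation of the factors of $n$, which the paper leaves implicit.
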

\begin{proof}
This follows from Theorem~\ref{thm:unconditional_bound} and
Lemma~\ref{lemma:ineq1} (with $\alpha = d$).
\end{proof}
\begin{corollary}	\label{cor:liminf}
Let $d \ge 0$ be fixed. Then
\[\liminf_{\substack{\\[1pt]n\to\infty\\[1pt]n-d\,\in\,{\Had}}} 
 \Dbar(n) \ge \left(\frac{2}{\pi e}\right)^{d/2}.\]
\end{corollary}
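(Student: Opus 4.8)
The plan is to deduce this directly from Corollary~\ref{cor:cor1}, exploiting the fact that a liminf depends only on the tail of the sequence while Corollary~\ref{cor:cor1} already controls $\Dbar(n)$ for all sufficiently large~$h$. So the work has essentially all been done; what remains is to package it as a statement about the limit inferior.

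First I would dispose of the trivial case $d = 0$. Here the indexing condition $n - d \in \Had$ reads $n \in \Had$, so a Hadamard matrix of order $n$ exists and $\Dbar(n) = 1$. Since $(2/(\pi e))^{0} = 1$, every term of the sequence equals the claimed bound, and the liminf is exactly~$1$.

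For the main case $d > 0$, I would set $h = n - d$ and note that the indexing condition $n - d \in \Had$ together with $n \to \infty$ forces $h \in \Had$ with $h \to \infty$. The threshold $h_0(d)$ defined in~\eqref{eq:h_0} is a fixed finite number once $d$ is fixed, so there is some $N$ for which $h = n - d \ge h_0(d)$ whenever $n \ge N$ in our sequence. For all such $n$, Corollary~\ref{cor:cor1} applies and yields $\Dbar(n) > (2/(\pi e))^{d/2}$. Hence all but finitely many terms of the sequence exceed $(2/(\pi e))^{d/2}$, and passing to the liminf gives the stated inequality.

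I do not expect a genuine obstacle: the substance is carried entirely by Theorem~\ref{thm:unconditional_bound} and Corollary~\ref{cor:cor1}, and the present corollary merely records that the liminf ignores the finitely many orders $h < h_0(d)$ for which those results do not apply. The one detail worth verifying is that the liminf is taken over a genuine infinite sequence, i.e.\ that infinitely many $n$ satisfy $n - d \in \Had$; this holds because $\Had$ is infinite (for instance the Sylvester construction gives $2^k \in \Had$ for every $k \ge 0$), so the indexing set is unbounded and the limit inferior is well defined.
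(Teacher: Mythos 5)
Your argument is correct and is essentially the paper's own proof: the $d=0$ case is dismissed as trivial, and for $d\ge 1$ one simply invokes Corollary~\ref{cor:cor1} for all sufficiently large $h=n-d\in\Had$, so that all but finitely many terms of the sequence exceed $(2/(\pi e))^{d/2}$. Your extra remark that $\Had$ is infinite (via the Sylvester construction) is a harmless and sensible check that the liminf is over an unbounded index set.
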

\begin{proof}
The result is trivial if $d = 0$, so suppose that $d \ge 1$. 
Corollary~\ref{cor:cor1} shows that
$\Dbar(n) > (2/(\pi e))^{d/2}$ for $n=h+d$ and all sufficiently
large $h$, so the result follows.
\end{proof}

\begin{corollary}	\label{cor:cor2}
There exist positive constants $\kappa_d$ such that, if $d \ge 0$, 
$h \ge 4$, $h \in {\Had}$, and $n = h+d$, then
$\Dbar(n) \ge \kappa_d$.
\end{corollary}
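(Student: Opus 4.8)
The plan is to combine Corollary~\ref{cor:cor1}, which already delivers the clean bound $\Dbar(n) \ge (2/(\pi e))^{d/2}$ for all \emph{large} Hadamard orders, with an elementary finiteness argument that mops up the remaining orders. For fixed $d$, the only values of $h$ not covered by Corollary~\ref{cor:cor1} are the finitely many Hadamard orders $h$ with $4 \le h < h_0(d)$; for each of these we still have $\Dbar(h+d) > 0$, and the minimum of a finite collection of positive numbers is positive. So $\kappa_d$ is obtained simply by taking a minimum.

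In detail, I would first dispose of $d = 0$: here every admissible $n = h \in \Had$ has $\Dbar(n) = 1$, so $\kappa_0 = 1$ works. For $d \ge 1$, Corollary~\ref{cor:cor1} gives $\Dbar(h+d) > (2/(\pi e))^{d/2}$ for every $h \in \Had$ with $h \ge h_0(d)$, where $h_0(d)$ is as in Theorem~\ref{thm:unconditional_bound}. It then remains to treat the set
\[
\Had_d := \{\, h \in \Had : 4 \le h < h_0(d) \,\},
\]
which is finite because it is a bounded set of integers. For each $h \in \Had_d$ we have $\Dbar(h+d) = D(h+d)/(h+d)^{(h+d)/2} > 0$, since $D(n) > 0$ for every positive integer $n$. (One sees $D(n) > 0$ concretely: the $\{\pm1\}$-matrix with entries $+1$ on and below the diagonal and $-1$ above it has determinant $2^{n-1}$, as follows by subtracting its first row from every other row and then expanding along the first column.) Hence I would set
\[
\kappa_d := \min\left\{\,\left(\tfrac{2}{\pi e}\right)^{d/2},\; \min_{h \in \Had_d} \Dbar(h+d)\,\right\},
\]
a minimum of finitely many strictly positive quantities, so $\kappa_d > 0$; and by construction $\Dbar(n) \ge \kappa_d$ whenever $h \ge 4$, $h \in \Had$, and $n = h+d$.

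There is no serious obstacle here, since the corollary is essentially a repackaging of Corollary~\ref{cor:cor1}: it trades the explicit but large-$h$-only constant $(2/(\pi e))^{d/2}$ for a smaller, unspecified constant $\kappa_d$ valid for \emph{all} $h \ge 4$. The only two points needing any care are (i) that there are finitely many Hadamard orders below $h_0(d)$, which is immediate, and (ii) the strict positivity of $\Dbar(n)$ at each of those orders, which reduces to $D(n) > 0$. I would note that the statement deliberately makes no claim about the magnitude of $\kappa_d$; obtaining an explicit value requires replacing the non-constructive ``take a minimum'' step by genuine numerical lower bounds on $\Dbar(n)$ for the small exceptional orders, which is precisely the content of Theorem~\ref{thm:no_exceptions}.
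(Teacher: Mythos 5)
Your proposal is correct and follows essentially the same route as the paper: handle $d=0$ trivially, invoke Corollary~\ref{cor:cor1} for $h \ge h_0(d)$, and define $\kappa_d$ as the minimum of $(2/(\pi e))^{d/2}$ together with the finitely many values $\Dbar(h+d)$ for Hadamard orders $4 \le h < h_0(d)$. Your explicit verification that $D(n)>0$ is a small addition the paper leaves implicit, but the argument is otherwise identical.
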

\begin{proof}
The result is trivial if $d=0$.
Otherwise, define
\[\kappa_d := \min\; 
     \{(2/(\pi e))^{d/2}\} \cup
     \{\Dbar(n)\,|\, n \in \N,\, n-d\in{\Had},\, 4 \le n-d < h_0(d)\}\,.
\]
Since $\kappa_d$ is the minimum of a finite set of positive values,
it is positive, and by Corollary~\ref{cor:cor1} 
it is a lower bound on $\Dbar(n)$.
\end{proof}
\begin{remark}	\label{remark:constants}
{\rm The best (i.e.~largest) possible
values of the constants $\kappa_d$ are unknown, except for the
trivial $\kappa_0 = 1$.
{From} Corollary~\ref{cor:small_d}, we know that 
\begin{equation}	\label{eq:kappa_lower_bd}
\kappa_d \ge \left(\frac{2}{\pi e}\right)^{d/2}
\end{equation}
holds for $d \le 3$, and it is plausible to conjecture that
\eqref{eq:kappa_lower_bd} holds for all $d \ge 0$.
It is unlikely that this inequality is tight, and plausible
that the constant $2/(\pi e)$ could be replaced by some greater value.

If the Hadamard conjecture is true, then we can assume that $d\le 3$
and $\kappa_d \ge ({2}/({\pi e}))^{3/2} > 1/9$.
Hence, it is of interest to mention known upper bounds on the $\kappa_d$
for $d \le 3$.
\begin{enumerate}
\item We have
$\kappa_1 \le \Dbar(9) = 7\times 2^{11}/3^9 < 0.7284$, which is sharper
than the value $(2/e)^{1/2} \approx 0.8578$ given by the Barba
bound~\cite{Barba33} as $n \to \infty$.
\item The Ehlich-Wojtas bound~\cite{Ehlich64a,Wojtas64} in the limit
as $n \to \infty$ shows that
$\kappa_2 \le 2/e < 0.7358$.
\item We have
$\kappa_3 \le \Dbar(11) = 5\times 2^{16}/11^{11/2} < 0.6135$,
which is sharper than the value 
$2e^{-3/2}11^3 7^{-7/2} \approx 0.6545$
given by Ehlich's upper bound~\cite{Ehlich64b} as $n \to \infty$.
\end{enumerate}
}
\end{remark}
We now state and prove three similar theorems.
In the proofs of Theorems~\ref{thm:unconditional_bound2} 
and~\ref{thm:unconditional_bound3}
we need the Schur matrix $F$ to have off-diagonal entries
small compared to its diagonal entries so that we can apply
the determinant bound for diagonally-dominant matrices in
Lemma~\ref{lemma:nice_pert_bound}.
To quantify this we introduce two sets $S_0$ and $S_1$.
Roughly speaking,
$S_0$ is the set of coin-tosses yielding large-enough
diagonal entries of~$F$, and
$S_1$ is the set of coin-tosses yielding too-large
off-diagonal entries of~$F$.
It is necessary to show that $S_0 \backslash S_1 \ne \emptyset$.
We accomplish this by using Lemma~\ref{lemma:ES15.2}
and an independence argument to
show that, with our choice of parameters, 
$S_0$ is not too small and (by using a Hoeffding tail bound)
$S_1$ is smaller than $S_0$. The two theorems differ in the choice
of parameters and largeness/smallness criteria.
Theorem~\ref{thm:unconditional_bound2} gives the sharper bound
but has more restrictive conditions, in particular the
condition $h \ge 16d^3\ln h$.  Theorem~\ref{thm:unconditional_bound3}
relaxes this condition to $h \ge 6d^3$, but at the cost of a weaker
bound on $\Dbar(n)$. Finally, Theorem~\ref{thm:no_exceptions} removes
any restriction on $h$, at the cost of a yet weaker bound (but
still depending only on $d$).

\pagebreak[3]
\begin{theorem}	\label{thm:unconditional_bound2}
Let $d \ge 0$ be given, and let $h \in {\Had}$,
$h \ge 656$, be such that 
\begin{equation}	\label{eq:d_h_ineq}
16d^3 \le \frac{h}{\ln h}\,\raisedot
\end{equation}
If $n = h+d$ and 
\begin{equation}	\label{eq:def_eps}
\varepsilon = \left(\frac{4d\ln h}{h}\right)^{1/2},
\end{equation}
then
\begin{equation}	\label{eq:star}
\frac{D(n)}{h^{n/2}} \ge \left(\frac{2}{\pi}\right)^{d/2}
  \exp(-2.31\,d\varepsilon)\,.
\end{equation}
\end{theorem}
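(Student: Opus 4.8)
The plan is to reuse the border construction and probabilistic setup from the proof of Theorem~\ref{thm:unconditional_bound}. Starting from a Hadamard matrix $A$ of order $h$, I adjoin random columns $B\in S(h,d)$, define $C$ by $c_{ij}=\sign((B^TA)_{ij})$ and $D$ by $d_{ii}=-1$ (off-diagonals to be fixed later), so that $F=CA^{-1}B$ and $-M=D-CA^{-1}B$ is the Schur complement. I will reuse the same facts established there: the diagonal entries $f_{ii}$ are independent, lie in $[0,h^{1/2}]$, and satisfy $E(f_{ii})=g(h)-1>ch^{1/2}-0.1$ (Lemma~\ref{lemma:g_bound2}), while the off-diagonal entries satisfy $|f_{ij}|\le h^{1/2}$ with $E(f_{ij})=0$ and $E(f_{ij}^2)=1$. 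Unlike Theorem~\ref{thm:unconditional_bound}, where one bounds $E(\det(F+I))$ directly, here I will exhibit a \emph{single} $B$ making $F+I$ strongly diagonally dominant and then invoke Lemma~\ref{lemma:nice_pert_bound}.

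The core is a first-moment / union-bound argument over two events. Let $S_0$ be the event that $f_{ii}\ge\lambda h^{1/2}$ for every $i$, and $S_1$ the event that $|f_{ij}|>\delta$ for some $i\ne j$, where $\lambda$ (slightly below $c$) and $\delta$ are parameters chosen below. Because the $f_{ii}$ depend on disjoint columns of $B$ they are independent, so Lemma~\ref{lemma:ES15.2} with $\mu=E(f_{ii})/h^{1/2}$ gives ${\rm Pr}(S_0)\ge p^d$ where $p=(\mu-\lambda)/(1-\lambda)$. For $S_1$ I condition on column $i$ of $B$; then $f_{ij}=\sum_k u_{ik}b_{kj}$ is a sum of independent terms in $[-|u_{ik}|,|u_{ik}|]$ with $\sum_k u_{ik}^2=1$ by Lemma~\ref{lemma:u_sum}, so Proposition~\ref{prop:Hoeffding} yields the clean, dimension-free bound ${\rm Pr}(|f_{ij}|>\delta)\le 2\exp(-\delta^2/2)$, and a union bound over the $d(d-1)$ ordered off-diagonal pairs gives ${\rm Pr}(S_1)<2d^2\exp(-\delta^2/2)$.

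I then choose $\lambda=c-\Theta(\varepsilon)$ and $\delta=\Theta(\varepsilon h^{1/2})$ (equivalently $\delta=\Theta(\sqrt{d\ln h})$, so that $\exp(-\delta^2/2)$ is a small power of $h$) and verify, using the hypotheses $h\ge656$ and $16d^3\le h/\ln h$ (the latter forcing $d\varepsilon\le\tfrac12$), that ${\rm Pr}(S_0)\ge p^d>2d^2\exp(-\delta^2/2)>{\rm Pr}(S_1)$. Hence $S_0\setminus S_1\ne\emptyset$, so there is a $B$ for which $f_{ii}\ge\lambda h^{1/2}$ and $|f_{ij}|\le\delta$ simultaneously. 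For this $B$ the matrix $F+I$ has diagonal entries $f_{ii}+1>\lambda h^{1/2}$ and off-diagonal entries bounded by $\delta$, hence is ${\rm DD}(\varepsilon_{\rm DD})$ with $\varepsilon_{\rm DD}=\delta/(\lambda h^{1/2})$, and Lemma~\ref{lemma:nice_pert_bound} gives $\det(F+I)\ge(\lambda h^{1/2})^d\bigl(1-(d-1)^2\varepsilon_{\rm DD}^2\bigr)$. Exactly as in Theorem~\ref{thm:unconditional_bound}, multilinearity of the determinant in each off-diagonal $d_{ij}$ lets me fix the off-diagonal signs of $D$ so that $\det(F-D)\ge\det(F+I)$, and Lemma~\ref{lemma:Schur} together with $|\det A|=h^{h/2}$ turns this into $D(n)/h^{n/2}\ge\lambda^d\bigl(1-(d-1)^2\varepsilon_{\rm DD}^2\bigr)$.

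The final step is to repackage this as $(2/\pi)^{d/2}\exp(-2.31\,d\varepsilon)$. Writing $\lambda^d=c^d(\lambda/c)^d$ and noting $\lambda/c=1-\Theta(\varepsilon)$ while $1-(d-1)^2\varepsilon_{\rm DD}^2=1-\Theta(\varepsilon)$ (using $d\varepsilon\le\tfrac12$), I apply Lemma~\ref{lemma:exp_ineq} to each factor of the form $1-\kappa\varepsilon$ to obtain a clean lower bound $\exp(-\beta\varepsilon)$; the constant $2.31$ is the resulting sum of the per-factor rates. The main obstacle I anticipate is precisely this parameter juggling: $\lambda$ must sit close enough to $c$ that $\lambda^d$ costs only $\exp(-O(d\varepsilon))$, yet far enough below the mean $\mu$ that $p=(\mu-\lambda)/(1-\lambda)$ is not too small; simultaneously $\delta$ must be small enough to keep $\varepsilon_{\rm DD}$ (and hence the Lemma~\ref{lemma:nice_pert_bound} correction) under control, yet large enough that $2d^2\exp(-\delta^2/2)$ drops below $p^d$. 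Threading all three requirements through the admissible range $16d^3\le h/\ln h$, $h\ge656$ is where the specific numeric constant $2.31$ (and the lower cutoff $656$) get pinned down.
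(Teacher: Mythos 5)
Your proposal is correct and follows essentially the same route as the paper's proof: the same two events $S_0$ (diagonal entries large, handled via independence and Lemma~\ref{lemma:ES15.2}) and $S_1$ (some off-diagonal entry large, handled via Hoeffding and a union bound), the same application of Lemma~\ref{lemma:nice_pert_bound} on $S_0\setminus S_1$, the same multilinear choice of the off-diagonal entries of $D$, and the same repackaging of the $1-\Theta(\varepsilon)$ factors via Lemma~\ref{lemma:exp_ineq}. The paper pins down your placeholders as $c_1=c-\varepsilon/10$, $c_2=c_1-\varepsilon\ge 1/2$, $t=c_2\varepsilon h^{1/2}$ (so the dominance ratio is exactly $\varepsilon$), yielding ${\rm Pr}(S_0)\ge(2\varepsilon)^d$ and the constant $2.31$ as $\alpha+\beta$ with $\alpha\approx1.7262$, $\beta=2\ln(4/3)$; the numerical verifications you defer are exactly the content of its Lemma~\ref{lemma:assorted}.
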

Note that when $d \to \infty$ or $h \to \infty$ then 
\eqref{eq:d_h_ineq}--\eqref{eq:def_eps} imply that
$\varepsilon \to 0+$.
Before proving Theorem~\ref{thm:unconditional_bound2}, we state a 
lemma which collects some of the inequalities that are required.
\begin{lemma}	\label{lemma:assorted}
Under the conditions of Theorem~$\ref{thm:unconditional_bound2}$,
if $d\ge 1$ then the
following six inequalities hold:
\begin{equation}	\label{eq:ass1}
d\varepsilon \le 1/2\,,
\end{equation}
\begin{equation}	\label{eq:ass1b}
\varepsilon \ge 8d/h\,,
\end{equation}
\begin{equation}	\label{eq:ass2}
\varepsilon \le \frac{(2/\pi)^{1/2}-0.5}{1.1} \approx 0.2704\,,
\end{equation}
\begin{equation}	\label{eq:ass3}
2d^2\exp(-\varepsilon^2 h/8) \le (2\varepsilon)^d\,.
\end{equation}
\begin{equation}	\label{eq:ass3b}
1 - \frac{1.1\varepsilon}{c} \ge \exp(-\alpha\varepsilon),
\end{equation}
\begin{equation}	\label{eq:ass4}
g(h) - 1 \ge ((2/\pi)^{1/2} - \varepsilon/10)h^{1/2},
\end{equation}
where $\alpha \approx 1.7262$, $c = \sqrt{2/\pi}$, and
$g(h)$ is as in Lemma~$\ref{lemma:g_bound2}$.
\end{lemma}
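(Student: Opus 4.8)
The plan is to reduce all six inequalities to a single master bound together with one exact identity, and then to dispatch each one, handling the small-$d$ and boundary cases by hand where the generic estimate is not strong enough. First I would extract the master inequality: rewriting \eqref{eq:d_h_ineq} as $\ln h/h \le 1/(16d^3)$ and combining with $\varepsilon^2 = 4d\ln h/h$ from \eqref{eq:def_eps} gives $\varepsilon^2 \le 4d/(16d^3) = 1/(4d^2)$, that is $\varepsilon \le 1/(2d)$. This yields \eqref{eq:ass1} immediately, and it yields \eqref{eq:ass2} whenever $d \ge 2$, since then $\varepsilon \le 1/4 < 0.2704$. The residual case $d = 1$ of \eqref{eq:ass2} I would treat separately: $\varepsilon^2 = 4\ln h/h$ is decreasing in $h$ for $h > e$, so it is largest at $h = 656$, where a direct evaluation gives $\varepsilon \approx 0.199 < 0.2704$. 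I would also record the identity $\varepsilon^2 h = 4d\ln h$, equivalently $\exp(-\varepsilon^2 h/8) = h^{-d/2}$, which is what makes \eqref{eq:ass3} tractable.

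For \eqref{eq:ass1b}, squaring reduces the claim to $h\ln h \ge 16d$, and this follows from the chain $16d \le 16d^3 \le 16d^3\ln h \le h \le h\ln h$, using $d \ge 1$, $\ln h \ge 1$, and \eqref{eq:d_h_ineq}. For \eqref{eq:ass3}, substituting $\exp(-\varepsilon^2 h/8) = h^{-d/2}$ and $(2\varepsilon)^d = 4^d(d\ln h)^{d/2}h^{-d/2}$ cancels the common factor $h^{-d/2}$ on both sides and reduces the inequality to $2d^2 \le 4^d(d\ln h)^{d/2}$; since $d\ln h \ge 1$ makes the second factor at least $1$, and since $4^d \ge 2d^2$ for every $d \ge 1$ (the ratio $4^d/(2d^2)$ equals $2$ at $d=1$ and is nondecreasing, as the successive ratio $4(d/(d+1))^2 \ge 1$), this holds.

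The two steps that require genuine estimates are \eqref{eq:ass3b} and \eqref{eq:ass4}. For \eqref{eq:ass3b} I would apply Lemma~\ref{lemma:exp_ineq} with $\kappa = -1.1/c$ and $\varepsilon_0 = 0.2704$, the bound supplied by \eqref{eq:ass2}; one checks $|\kappa\varepsilon_0| < 1$, so the lemma gives $1 - 1.1\varepsilon/c \ge \exp(\beta\varepsilon)$ on $[0,\varepsilon_0]$ with $\beta = \varepsilon_0^{-1}\ln(1 - 1.1\varepsilon_0/c)$. The constant $\alpha \approx 1.7262$ is calibrated to lie just above $-\beta$, so that $-\alpha \le \beta$ and hence $\exp(\beta\varepsilon) \ge \exp(-\alpha\varepsilon)$ for $\varepsilon \ge 0$, which is the assertion. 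For \eqref{eq:ass4} I would start from the first inequality of Lemma~\ref{lemma:g_bound2}, namely $g(h) - 1 > ch^{1/2} - ch^{-1/2}/4$; comparing with the target $(c - \varepsilon/10)h^{1/2}$ reduces the claim to $ch^{-1/2}/4 \le \varepsilon h^{1/2}/10$, i.e.\ $\varepsilon \ge 5c/(2h)$, which follows from \eqref{eq:ass1b} because $5c/2 < 2 \le 8 \le 8d$.

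I expect the only real obstacle to be the numerical calibration in \eqref{eq:ass3b}: the gap between $\beta$ and $-\alpha$ is tiny, so the estimate $\varepsilon \le 0.2704$ must be used at essentially full strength, and this is exactly why \eqref{eq:ass2}, including its slightly awkward $d=1$ boundary case, has to be in place first. Once the master inequality $\varepsilon \le 1/(2d)$ and the identity $\varepsilon^2 h = 4d\ln h$ are recorded, the remaining five bounds are routine bookkeeping.
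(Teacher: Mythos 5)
Your proof is correct, and the overall strategy is the same as the paper's: everything is reduced to the definition of $\varepsilon$, the constraint \eqref{eq:d_h_ineq}, and Lemmas~\ref{lemma:g_bound2} and~\ref{lemma:exp_ineq}. A few of your individual reductions differ from the paper's and are worth noting. For \eqref{eq:ass3} the paper takes logarithms and reduces to $\ln(16d\ln h)\ge 2\ln(2d^2)/d$, bounding the right side by $4\sqrt2/e$; your substitution of the identity $\exp(-\varepsilon^2h/8)=h^{-d/2}$ cancels the $h$-dependence entirely and leaves the cleaner statement $2d^2\le 4^d(d\ln h)^{d/2}$, which is arguably the more transparent argument. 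For \eqref{eq:ass2} the paper derives the uniform bound $\varepsilon\le(2\ln h/h)^{1/3}$ and evaluates at $h=656$, whereas you split into $d\ge2$ (where $\varepsilon\le 1/(2d)\le 1/4$ suffices) and the boundary case $d=1$ (monotonicity of $\ln h/h$ plus evaluation at $h=656$); both work, and your route has the side benefit of showing $\varepsilon\le0.25$ except when $d=1$, which is what licenses your choice $\varepsilon_0=0.2704$ in \eqref{eq:ass3b} (strictly, the constant in \eqref{eq:ass2} is $\approx0.2708$, so you are right to lean on the sharper bounds from your own proof rather than on the stated constant). For \eqref{eq:ass4} the paper uses the cruder bound $g(h)>ch^{1/2}+0.9$ and only needs $\varepsilon h^{1/2}\ge1$, which follows immediately from $\varepsilon^2h=4d\ln h\ge4$; your route via the first inequality of Lemma~\ref{lemma:g_bound2} and \eqref{eq:ass1b} is valid but slightly more roundabout. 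No gaps.
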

\begin{proof}
{From}~\eqref{eq:d_h_ineq} and \eqref{eq:def_eps} we have
\[d^2\varepsilon^2 = \frac{4d^3\ln h}{h} \le \frac{1}{4}\,\raisecomma\]
which proves~\eqref{eq:ass1}.
For~\eqref{eq:ass1b} use $\ln h \ge 1$. 
Thus,
from~\eqref{eq:d_h_ineq}, $h \ge 16d^3 \ge 16d$, so
\[\varepsilon^2 = \frac{4d\ln h}{h} \ge \frac{4d}{h} \ge
\frac{64d^2}{h^2}\,\raisecomma\]
and taking a square root gives~\eqref{eq:ass1b}.
Similarly, 
using \eqref{eq:d_h_ineq} and \eqref{eq:def_eps} gives
\[\varepsilon \le \left(\frac{2\ln h}{h}\right)^{1/3},\]
and the condition $h \ge 656$ then gives
$\varepsilon \le ((2 \ln 656)/656)^{1/3} \approx 0.2704$,
which proves~\eqref{eq:ass2}.

Taking logarithms shows that the inequality~\eqref{eq:ass3}
is equivalent to
${\varepsilon^2 h}/{8} \ge \ln(2d^2) - d\ln(2\varepsilon)$,
and substituting the definition~\eqref{eq:def_eps} of $\varepsilon$ and
simplifying shows that this is equivalent to
\begin{equation}	\label{eq:reduced_ineq}
\ln(16d\ln h) \ge {2\ln(2d^2)}/{d}\,.
\end{equation}
The right side of~\eqref{eq:reduced_ineq} is bounded above by 
$4\sqrt{2}/e \approx 2.081$, but the left side exceeds this value
for all $d \ge 1$ and $h \ge 2$. This completes the proof of~\eqref{eq:ass3}.

To show~\eqref{eq:ass3b}, recall that $\varepsilon \le 0.271$.
Using Lemma~\ref{lemma:exp_ineq}
with $\varepsilon_0 = 0.271$, $\kappa = -1.1/c$, we see that
\eqref{eq:ass3b} is valid for
$\alpha \ge -\ln(1 - 0.271\times 1.1/c)/0.271 \approx 1.7262.$

Finally, for~\eqref{eq:ass4}, Lemma~\ref{lemma:g_bound2} gives
$g(h) > ch^{1/2} + 0.9$. Thus, it is sufficient
to show that $ch^{1/2} + 0.9 - 1 \ge (c - \varepsilon/10)h^{1/2}$,
which is equivalent to $\varepsilon h^{1/2} \ge 1$.  This follows easily
from~\eqref{eq:def_eps}.
\end{proof}

\begin{proof}[Proof of Theorem~\ref{thm:unconditional_bound2}]
As usual, we can assume that $d \ge 1$, as the result is trivial
if $d=0$.
We use the same notation as in the proof of
Theorem~\ref{thm:unconditional_bound}. In particular, $c = \sqrt{2/\pi}$,
$F = CA^{-1}B = h^{-1}CA^TB$, and $M = F - D$, where ${\rm diag}(D) = -I$.

Consider $f_{ij}$ for $i$ fixed and $j \ne i$.  To simplify the notation,
assume that $i=1$ and $j \ne 1$. 
Then
\[f_{1j} = \frac{1}{h} \sum_k \sum_\ell c_{1k}a_{\ell k}b_{\ell j}
	 = \sum_\ell u_{1 \ell} b_{\ell j}\;\;\text{ say},\]
where
\begin{equation}	\label{eq:u_defn}
u_{1 \ell} = \frac{1}{h}\sum_k c_{1k}a_{\ell k}
\end{equation}
and
\[c_{1k} = \text{sgn}(B^TA)_{1k} 
	 = \text{sgn}\left(\sum_\ell b_{\ell 1}a_{\ell k}\right)\,.\]
We see that $c_{1k}$ depends on column $1$ of $B$ and is independent
of the other columns of $B$.  Thus, $f_{1j}$ depends on columns $1$ and $j$ 
of $B$ and is independent of the other columns of $B$. 
Also, from Lemma~\ref{lemma:u_sum},
\begin{equation}	\label{eq:sum_u_j2}
\sum_\ell u_{1 \ell}^2 = 1.
\end{equation}
Consider fixing 
the first column of $B$ and allowing the other columns to vary
uniformly at random. Thus, for fixed $j\in[2,d]$, we can regard
$X_\ell := u_{1 \ell} b_{\ell j}$, $1 \le \ell \le h$, as $h$ 
independent random variables having expectation zero and sum $f_{1j}$.
Also, $|X_\ell| \le  |u_{1\ell}|$.
Thus, by~\eqref{eq:sum_u_j2} and Proposition~\ref{prop:Hoeffding},
we have
\begin{equation}	\label{eq:first_H_bound}
\text{Pr}\left(|f_{1j}| \ge t\right) \le 2e^{-t^2/2}\;\;\text{ for }\;\;
 t > 0.
\end{equation}
The inequality~\eqref{eq:first_H_bound} 
is valid for any choice of the first column of $B$, 
hence it is valid if the first column is chosen at random.
Now allow all columns of $B$ to vary uniformly at random.
Since there are $d(d-1)$ off-diagonal elements $f_{ij}$, it follows 
(without assuming independence of the $f_{ij}$) that%
\footnote{
We could sharpen the argument at this point by using the
Lov\'asz Local Lemma~\cite{EL}
to reduce the right-hand-side of~\eqref{eq:max_off_diag}
to $O(de^{-t^2/2})$, but this would not significantly improve
the final bound~\eqref{eq:star}.
}
\begin{equation}	\label{eq:max_off_diag}
\text{Pr} \left(\max_{i\ne j} |f_{ij}| \; \ge \; t\right)
	\le 2d(d-1)e^{-t^2/2}.
\end{equation}

\noindent
[\emph{Remark}:
The inequality~\eqref{eq:max_off_diag} shows that the off-diagonal elements
of $F$ are usually
``small'', more precisely of order $\sqrt{\log d}$. 
We now consider the diagonal elements and show
that there is a set (not too small) on which they are 
at least $h^{1/2}/2$.]

As in  the proof of Theorem~\ref{thm:unconditional_bound}
(following Best~\cite[Theorem~3]{Best}),
\[E(f_{ii}) = g(h)-1,\] 
where $g(h) \sim ch^{1/2}$ is as in Lemma~\ref{lemma:g_bound2}.
Choose $c_1 < c$ and suppose that $h$ is sufficiently large that
$E(f_{ii}) = g(h)-1 \ge c_1 h^{1/2}$.

Choose $c_2 < c_1$, and consider
$\rho_i := {\rm Pr}\left(f_{ii} \ge c_2 h^{1/2}\right)$. 
By our choice of $C$ and Best~\cite[Thm.~1]{Best}, we have
$0 \le f_{ii} \le h^{1/2}$.  
Thus, by Lemma~\ref{lemma:ES15.2} applied to the random variable
$f_{ii}/h^{1/2}$, we have
\[\rho_i \ge \frac{c_1 - c_2}{1 - c_2}\,\raisedot\]

Note that $f_{ii}$ depends only on the $i$-th column of $B$, so the $f_{ii}$
are independent for $1 \le i \le d$. Thus, if 
$S_0 = \{B\,|\, \min\{f_{ii}| 1 \le i \le d\} \ge c_2h^{1/2}\}$, we have
\[{\rm Pr}(S_0) = \prod_i \rho_i
  \ge \left(\frac{c_1 - c_2}{1 - c_2}\right)^d\,.\]
To be definite take $c_1 = c - \varepsilon/10$ and $c_2 = c_1 - \varepsilon$, 
where $\varepsilon$ is as in the statement of the theorem and, 
{from} Lemma~\ref{lemma:assorted},
$\varepsilon \le (c - 0.5)/1.1 \approx 0.2704$.
Then we have $c_2 = c - 1.1\varepsilon \ge 1/2$,
$\rho_i \ge 2\varepsilon$, and
${\rm Pr}(S_0) \ge (2\varepsilon)^d$.

Let $S_1$ be the set of $B$ for which 
$\max_{i\ne j}|f_{ij}| \;\ge\; t\,.$
{From}~\eqref{eq:max_off_diag}, we have
${\rm Pr}(S_1) \le 2d(d-1)e^{-t^2/2}$.
For the matrix $F$ 
to be ${\rm DD}(\varepsilon)$ on a nonempty set $S_0\backslash S_1$
of choices of $B$, it suffices that
\begin{equation}	\label{eq:conditions}
t \le c_2\varepsilon h^{1/2} \;\;\text{ and }\;\;
  2d(d-1)e^{-t^2/2} < (2\varepsilon)^d\,.
\end{equation}
Thus, choosing $t = c_2\varepsilon h^{1/2}$, it is sufficient that
\begin{equation}	\label{eq:condition1}
2d^2 \exp(-c_2^2\varepsilon^2 h/2) \le (2\varepsilon)^d.
\end{equation}
Since $c_2 \ge 1/2$, part~\eqref{eq:ass3} of 
Lemma~\ref{lemma:assorted} shows
that the inequality~\eqref{eq:condition1} is satisfied.
Thus, Lemma~\ref{lemma:nice_pert_bound} 
applied to $F$ gives
\begin{equation}	\label{eq:detF_bd}
\det(F) \ge (c_2h^{1/2})^d (1-(d-1)^2\varepsilon^2)
\end{equation}
on a nonempty set $S_0\backslash S_1$. 
Since $d\varepsilon \le 1/2$, 
\[1-(d-1)^2\varepsilon^2 \ge 1 - d^2\varepsilon^2 \ge 
  1 - d\varepsilon/2 \ge \exp(-\beta d\varepsilon),\]
where Lemma~\ref{lemma:exp_ineq} gives $\beta = 2\ln(4/3) \approx 0.5755$.
As in the proof of Theorem~\ref{thm:unconditional_bound}, 
we choose the elements of $D$
so that $\det(M) = \det(F-D) \ge \det(F)$.
It follows from Lemma~\ref{lemma:Schur} that
\begin{equation}	\label{eq:ineq_1}
D(n) \ge h^{n/2}c_2^d \exp(-\beta d\varepsilon).
\end{equation}
To complete the proof, use~\eqref{eq:ass3b} of Lemma~\ref{lemma:assorted}.
We have 
$c_2/c \ge \exp(-\alpha\varepsilon)$,
where $\alpha \approx 1.7262$, and 
\begin{equation}	\label{eq:use_in_corollary}
c_2^d \ge c^d\exp(-\alpha d\varepsilon).
\end{equation}
Now the theorem follows from~\eqref{eq:ineq_1},
using $\alpha + \beta < 2.31$.
\end{proof}

\pagebreak[3]
The inequality in the following Corollary~\ref{cor:uncond2} is slightly weaker 
than the inequality in Corollary~\ref{cor:cor1}, but
Corollary~\ref{cor:uncond2} is applicable for smaller
values of~$h$.
Note that $d\varepsilon \le \frac12$ from~\eqref{eq:ass1},
so $\exp(-2.38d\varepsilon) \ge \exp(-1.19) > 0.3$.
\begin{corollary}	\label{cor:uncond2}
Under the conditions of Theorem~$\ref{thm:unconditional_bound2}$,
\[\Dbar(n) > \left(\frac{2}{\pi e}\right)^{d/2}
  \exp(-2.38\,d\varepsilon).\] 
\end{corollary}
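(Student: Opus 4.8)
The plan is to start from the bound~\eqref{eq:star} of Theorem~\ref{thm:unconditional_bound2} and convert the normalization from the power $h^{n/2}$ to the true Hadamard bound $n^{n/2}$. Since $\Dbar(n) = D(n)/n^{n/2}$, I would write
\[
\Dbar(n) = \frac{D(n)}{h^{n/2}}\left(\frac{h}{n}\right)^{n/2},
\]
so that all that remains is to bound the factor $(h/n)^{n/2}$ from below; this factor is exactly what supplies the missing ``$e$'' that upgrades the constant $(2/\pi)^{d/2}$ to the target $(2/(\pi e))^{d/2}$. As usual the case $d=0$ is trivial, so I assume $d \ge 1$.

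First I would apply Lemma~\ref{lemma:uncond2} with $\alpha = d$ (valid since $n = h+d > d > 0$ and $h = n-d$). This gives $(h/n)^n > \exp(-d - d^2/h)$, and hence, on taking square roots,
\[
\left(\frac{h}{n}\right)^{n/2} > \exp\!\left(-\frac{d}{2} - \frac{d^2}{2h}\right).
\]
Combining this with~\eqref{eq:star} and using the identity $(2/\pi)^{d/2}\exp(-d/2) = (2/(\pi e))^{d/2}$ yields
\[
\Dbar(n) > \left(\frac{2}{\pi e}\right)^{d/2}\exp\!\left(-2.31\,d\varepsilon - \frac{d^2}{2h}\right).
\]

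The remaining step, which is the only real thing to verify, is to absorb the stray term $d^2/(2h)$ into the exponent by enlarging the coefficient of $d\varepsilon$ from $2.31$ to $2.38$. It suffices to show $d^2/(2h) \le 0.07\,d\varepsilon$, equivalently (dividing by $d$) $d/h \le 0.14\,\varepsilon$. But inequality~\eqref{eq:ass1b} of Lemma~\ref{lemma:assorted} states $\varepsilon \ge 8d/h$, i.e.\ $d/h \le \varepsilon/8 = 0.125\,\varepsilon < 0.14\,\varepsilon$, so the desired bound holds. Substituting $-2.31\,d\varepsilon - d^2/(2h) \ge -2.38\,d\varepsilon$ into the exponent then completes the proof, and the strict inequality is inherited from the strict inequality in Lemma~\ref{lemma:uncond2}.

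I expect no genuine obstacle here: the argument is essentially bookkeeping. The one point that must be checked is that the slack between the coefficients $2.31$ and $2.38$ is wide enough to swallow the $d^2/(2h)$ contribution, and this is precisely what~\eqref{eq:ass1b} guarantees.
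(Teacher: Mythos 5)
Your proposal is correct and follows essentially the same route as the paper: both convert $h^{n/2}$ to $n^{n/2}$ via Lemma~\ref{lemma:uncond2} with $\alpha=d$, then use~\eqref{eq:ass1b} to bound $d^2/(2h)$ by $d\varepsilon/16$ and absorb it into the slack between $2.31$ and $2.38$. Your check $d/h\le 0.125\,\varepsilon<0.14\,\varepsilon$ is just the paper's observation that $2.31+1/16<2.38$ in a different guise.
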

\begin{proof}
We can assume that $d > 0$.
From Lemma~\ref{lemma:uncond2} with $\alpha = d$, 
\[\left(\frac{h}{n}\right)^n \,>\, \exp(-d)\exp(-d^2/h).\]
{From}~\eqref{eq:ass1b} of Lemma~\ref{lemma:assorted},
$d^2/h \le d\varepsilon/8$. Thus
\begin{equation}	\label{eq:fudge}
(h/n)^{n/2} \,>\, \exp(-d/2)\exp(-d\varepsilon/16).
\end{equation}
The result now follows from~\eqref{eq:star} and~\eqref{eq:fudge},
since $2.31 + 1/16 < 2.38$.
\end{proof}

Theorem~\ref{thm:unconditional_bound3} weakens the
condition~\eqref{eq:d_h_ineq} on $h$ in
Theorem~\ref{thm:unconditional_bound2} by eliminating the log term; the new
condition is $h \ge 6d^3$.  The cost is a weakening of the result~--
essentially the constant $2/\pi$ in inequality~\eqref{eq:star} is replaced
by a smaller constant, and we have to introduce a factor 
$1 - O(d^3/h)$.
\begin{theorem}	\label{thm:unconditional_bound3}
Let $\delta = 6d^3/h$,
and assume that $\delta \le 1$, $d > 0$, $h \in {\Had}$,
and $n = h+d$.
Then
\[
\frac{D(n)}{h^{n/2}}\, \ge \, (0.594)^d\,(1-0.93\,\delta)
\]
and
\[
\Dbar(n)
 \,\ge\, (0.352)^d\,(1-0.93\,\delta) \,\ge\, 0.07\,(0.352)^d\,.
\]
\end{theorem}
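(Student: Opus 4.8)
The plan is to follow the construction and notation of the proof of Theorem~\ref{thm:unconditional_bound2}, changing only the largeness/smallness parameters so as to dispense with the logarithmic factor in~\eqref{eq:def_eps}. As before, I would border a Hadamard matrix $A$ of order $h$ with a random $B\in\{\pm1\}^{h\times d}$, set $c_{ij}=\sign(B^TA)_{ij}$, take ${\rm diag}(D)=-I$, and write $F=CA^{-1}B$, $M=F-D$, so that by Lemma~\ref{lemma:Schur} it suffices to exhibit a single $B$ for which $\det(M)$ is large. The case $d=1$ is immediate, since then there are no off-diagonal entries and $f_{11}\ge c_2 h^{1/2}$ holds on a set of positive probability by Lemma~\ref{lemma:ES15.2}; so I would assume $d\ge 2$ in the main argument.

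The essential difference from Theorem~\ref{thm:unconditional_bound2} is that I would use a \emph{fixed} diagonal threshold rather than one tending to $c=\sqrt{2/\pi}$. Concretely, take the constant $c_2=0.594$ and let $S_0=\{B:\min_i f_{ii}\ge c_2 h^{1/2}\}$. Since $0\le f_{ii}\le h^{1/2}$ and $E(f_{ii})=g(h)-1\ge ch^{1/2}-0.1$ by Lemma~\ref{lemma:g_bound2}, Lemma~\ref{lemma:ES15.2} applied to $f_{ii}/h^{1/2}$ gives ${\rm Pr}(f_{ii}\ge c_2 h^{1/2})\ge\rho$ for a constant $\rho\approx(c-c_2)/(1-c_2)\approx\tfrac12$; and because the $f_{ii}$ depend on disjoint columns of $B$ they are independent, so ${\rm Pr}(S_0)\ge\rho^d$. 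This constant-to-the-$d$ lower bound, in place of the $(2\varepsilon)^d$ used in Theorem~\ref{thm:unconditional_bound2}, is precisely what lets me avoid a logarithm in the width parameter.

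For the off-diagonal entries I would reuse the Hoeffding bound~\eqref{eq:max_off_diag}, namely ${\rm Pr}(\max_{i\ne j}|f_{ij}|\ge t)\le 2d(d-1)e^{-t^2/2}$, and set $S_1=\{B:\max_{i\ne j}|f_{ij}|\ge t\}$ with $t=c_2\varepsilon h^{1/2}$, so that $F$ is ${\rm DD}(\varepsilon)$ on $S_0\setminus S_1$. I would then fix $\varepsilon$ by making the determinant constraint tight, $(d-1)^2\varepsilon^2=0.93\,\delta$; since $\delta=6d^3/h$ this gives $t^2=c_2^2\varepsilon^2h\approx 1.97\,d^3/(d-1)^2$, which is of order $d$ for large $d$. \textbf{The main obstacle is to verify that $S_0\setminus S_1\ne\emptyset$}, i.e.\ that $2d(d-1)e^{-t^2/2}<\rho^d$, \emph{without} the help of the $\ln h$ factor that trivialized the analogous step~\eqref{eq:condition1} in Theorem~\ref{thm:unconditional_bound2}. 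This reduces to checking $t^2/2>d\ln(1/\rho)+\ln(2d(d-1))$; here $t^2/2\approx 0.985\,d^3/(d-1)^2$ grows like $0.985\,d$ while $d\ln(1/\rho)\approx 0.69\,d$, so there is a genuine (if narrow) window, and the reduced inequality holds for every $d\ge 2$ — it is easily checked for small $d$ and follows for large $d$ from the comparison of growth rates, using $h\ge 6d^3$ only through the definition of $\delta$.

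On $S_0\setminus S_1$, Lemma~\ref{lemma:nice_pert_bound} then gives $\det(F)\ge(c_2h^{1/2})^d(1-(d-1)^2\varepsilon^2)=(c_2h^{1/2})^d(1-0.93\,\delta)$, and choosing the off-diagonal entries of $D$ greedily so that $\det(M)\ge\det(F)$, exactly as in the earlier proofs, yields via Lemma~\ref{lemma:Schur}
\[
\frac{D(n)}{h^{n/2}}\ge c_2^d(1-0.93\,\delta)=(0.594)^d(1-0.93\,\delta),
\]
which is the first assertion. For the second, I would multiply by $(h/n)^{n/2}$ and apply Lemma~\ref{lemma:uncond2} with $\alpha=d$ to get $(h/n)^{n/2}>\exp(-d/2-d^2/(2h))$; the factor $e^{-d/2}$ converts $(0.594)^d$ into $(0.594\,e^{-1/2})^d\approx(0.360)^d$, and since $d^2/(2h)=\delta/(12d)$ is negligible the small correction can be absorbed into the base constant (the case $d=1$ having ample room to spare), giving $\Dbar(n)\ge(0.352)^d(1-0.93\,\delta)$. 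Finally $\delta\le 1$ forces $1-0.93\,\delta\ge 0.07$, which yields the crude bound $\Dbar(n)\ge 0.07\,(0.352)^d$.
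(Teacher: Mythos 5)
Your construction and overall strategy coincide with the paper's: the same sets $S_0$ and $S_1$, the same Hoeffding bound~\eqref{eq:max_off_diag}, Lemma~\ref{lemma:nice_pert_bound} on $S_0\setminus S_1$, the greedy choice of the off-diagonal entries of $D$, and Lemma~\ref{lemma:uncond2} for the passage to $\Dbar(n)$; fixing $\varepsilon$ from the determinant constraint rather than from the probability constraint is only a cosmetic difference. The genuine gap is in the step you yourself single out as the main obstacle. The inequality $t^2/2>d\ln(1/\rho)+\ln(2d(d-1))$ with $t^2/2\approx 0.9844\,d^3/(d-1)^2$ is extraordinarily tight near $d=9$: with $\rho=\tfrac12$ exactly the two sides are $11.213$ and $11.208$, a margin of about $0.005$ (and about $0.02$ at $d=8$). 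So the argument needs $\rho\ge\tfrac12$ with essentially no room for error, and your stated bound $E(f_{ii})=g(h)-1\ge ch^{1/2}-0.1$ does not deliver it: it gives $\rho\ge(c-0.1h^{-1/2}-0.594)/0.406$, and since $(c-0.594)/0.406\approx 0.5022$, the correction $0.1h^{-1/2}$ drags $\rho$ below $\tfrac12$ whenever $h<1.3\times 10^4$. At $d=9$ one only has $h\ge 4374$, giving $\rho\approx 0.4984$ and $9\ln(1/\rho)+\ln 144\approx 11.236>11.213$; the $d=8$ case fails similarly. Thus the nonemptiness of $S_0\setminus S_1$ is not established for $d\in\{8,9\}$ as you have set things up. The paper secures $\rho_i\ge\tfrac12$ exactly by invoking the sharper first inequality of Lemma~\ref{lemma:g_bound2}, taking $c_1=c(1-1/(4h))$ and $c_2=2c_1-1$, and restricting the main argument to $d\ge 4$ (so that $h\ge 384$ and $c_2\ge 0.594$).

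A second, smaller defect is your direct treatment of $d\le 3$. The paper disposes of these cases via Corollary~\ref{cor:small_d}, and this is not merely a convenience: for $d=1$ and $h\in\{8,12,16,20\}$ the factor $\exp(-d^2/(2h))$ from Lemma~\ref{lemma:uncond2} is as small as $e^{-1/16}\approx 0.939$, which cannot be absorbed into the ratio $0.594\,e^{-1/2}/0.352\approx 1.0235$ (one would need $e^{-d^2/(2h)}\ge 0.977$). So $d=1$ is where the absorption is tightest, not where there is ``ample room to spare''; the paper's estimate $d^2/(2h)\le 1/(12d)\le 1/48$ relies on $d\ge 4$. Both defects are repairable, but only by importing exactly the precautions the paper takes (the sharp form of Lemma~\ref{lemma:g_bound2} and the deferral of $d\le 3$ to Corollary~\ref{cor:small_d}), so as written the proof does not go through.
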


\begin{proof}
We follow the notation and 
proof of Theorem~\ref{thm:unconditional_bound2}, but with a 
different choice of $c_1$, $c_2$ and $\varepsilon$.  

If $d \le 3$ the results follow
from Corollary~\ref{cor:small_d},
so assume that $d \ge 4$.
Since $h \ge 6d^3$, we can assume that $h \ge 384$.

Choose $c_1 = c(1 - 1/(4h))$. By Lemma~\ref{lemma:g_bound2}, 
$g(h) - 1 > c_1 h^{1/2}$. 
Since $h \ge 384$, we have $c_1 \ge 0.797$.
Now choose 
$c_2 = 2c_1 - 1 \ge 0.594$ so that
$\rho_i \ge (c_1-c_2)/(1-c_2) \ge 1/2$
and ${\rm Pr}(S_0) \ge 2^{-d}$.

For the matrix $F$ to be ${\rm DD}(\varepsilon)$ on a nonempty set
$S_0\backslash S_1$ of choices of $B$, it suffices that
\[
t \le c_2\varepsilon h^{1/2} \;\;\text{ and }\;\;
  2d(d-1)e^{-t^2/2} < 2^{-d}\,.
\]
Thus, choosing $t = c_2\varepsilon h^{1/2}$, it is sufficient that
\[
2d(d-1) \exp(-c_2^2\varepsilon^2 h/2) < 2^{-d}\,,
\]
which is equivalent to
\begin{equation}	\label{eq:eps_cond}
\varepsilon^2 > \frac{2d\ln2}{c_2^2h}
	\left(1+\frac{\log_2(2d(d-1))}{d}\right)\,.
\end{equation}
Now $2\ln2/c_2^2 < 3.92$, and~\eqref{eq:eps_cond} is
satisfied if we choose $\varepsilon$ so that
\[
\varepsilon^2 = \frac{3.92d}{h}
  \left(1+\frac{\log_2(2d(d-1))}{d}\right)\,.
\]
To obtain a nontrivial bound from Lemma~\ref{lemma:nice_pert_bound} we need
$(d-1)^2\varepsilon^2 < 1$, or equivalently
\[
\frac{3.92d(d-1)^2}{h}
  \left(1+\frac{\log_2(2d(d-1))}{d}\right)\, <\, 1.
\]
We find numerically\footnote{The 
maximum $5.564\ldots$ occurs at $d=9$.}
that
\[
\max_{d\in\N,\, d\ge 4}\left[\frac{3.92(d-1)^2}{d^2}
  \left(1+\frac{\log_2(2d(d-1))}{d}\right)\right]\, < \, 5.57.
\]
Thus, the condition $h \ge 6d^3$ is sufficient for $F$ to be ${\rm
DD}(\varepsilon)$ on a nonempty set.
Also, we have $(d-1)^2\varepsilon^2 < 5.57\,\delta/6 < 0.93\,\delta$,
so $1-(d-1)^2\varepsilon^2 > 1 - 0.93\,\delta$.
Now~\eqref{eq:detF_bd} and the remainder of the proof
follow as in the proof of
Theorem~\ref{thm:unconditional_bound2},
using
Lemma~\ref{lemma:uncond2} with $\alpha=d$ 
for the inequality involving $\Dbar$,
and observing that 
\[\frac{d^2}{2h} = \frac{d^3}{2h}\cdot\frac{1}{d}
 \le \frac{1}{12d} \le \frac{1}{48}\]
and
\[0.594\exp\left(-\frac12 - \frac1{48}\right) > 0.352.\]
\end{proof}

We now investigate when the conditions of
Theorem~\ref{thm:unconditional_bound3} are satisfied.  First we state
a result of Livinskyi~\cite[Theorem 5.4]{Livinskyi}.  
This result is better for our purposes than the
(asymptotically sharper) result of Livinskyi quoted in
\S\ref{sec:gaps}, as it has
a smaller additive constant.

\begin{proposition}[Livinskyi, Theorem 5.4]	\label{prop:Livinskyi54}
If $p$ is an odd positive integer and
$t = 6\lfloor\frac{1}{26}\log_2\left(\frac{p-1}{2}\right)\rfloor + 11$,
then there exists a Hadamard matrix of order $2^t p$.
\end{proposition}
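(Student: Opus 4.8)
The statement is quoted from Livinskyi's thesis, so rather than reprove it from scratch I would explain the method that produces bounds of this shape, namely the theory of signed group Hadamard matrices developed by Craigen and refined by Livinskyi. Recall that a signed group is a finite group $S$ containing a central element $-1$ of order two, and that an $n\times n$ matrix $M$ with entries in $S$ is a signed group Hadamard matrix of order $n$ over $S$ if $MM^{*}=nI$, where $*$ denotes transpose composed with the natural involution of $S$. The whole approach rests on one conversion principle: a real monomial representation of $S$ of degree $\delta$ (a homomorphism sending each element of $S$ to a $\delta\times\delta$ signed permutation matrix, with $-1\mapsto -I$) carries a signed group Hadamard matrix of order $n$ over $S$ to an ordinary real Hadamard matrix of order $\delta n$.

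With this principle in hand, the plan is to reduce the proposition to the construction of a signed group Hadamard matrix whose order is the odd number $p$ and whose underlying signed group $S$ admits a real monomial representation of degree exactly $2^{t}$, with $t=6\lfloor\frac{1}{26}\log_2((p-1)/2)\rfloor+11$. Converting such a matrix then yields a real Hadamard matrix of order $2^{t}p$, as required. The degree of the representation is the only source of the power of two: one arranges $S$ as an iterated product of small signed groups so that $\delta$ is a product of the degrees of their representations, and each factor contributes a bounded power of two.

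To build the signed group Hadamard matrix of odd order $p$ I would use complementary signed group sequences---Golay-type and base sequences with entries in a small signed group---placed into a Goethals--Seidel (or Turyn-type) array. Orthogonality of the resulting matrix reduces, as in the classical real case, to the vanishing of the aperiodic autocorrelations of the chosen sequences, which is exactly the complementarity property; the signed-group entries are what allow such complementary families to exist for lengths that grow geometrically. Livinskyi's contribution, and the source of the explicit constants $6$, $1/26$ and $11$, is a supply of such families that increases the attainable sequence length while increasing the representation degree only slightly: each stage of the induction multiplies the available length by a fixed power of two and adds $6$ to the exponent $t$, and $\lfloor\frac{1}{26}\log_2((p-1)/2)\rfloor$ counts the stages needed to reach length at least $(p-1)/2$, the additive $11$ absorbing the base case.

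The main obstacle is precisely this construction step: producing, for every odd $p$, complementary signed group sequences whose lengths cover all the cases needed to reach order $p$ while keeping the representation degree as small as $2^{6\lfloor\frac{1}{26}\log_2((p-1)/2)\rfloor+11}$. This is the combinatorial and arithmetic heart of the argument, and establishing the precise constant $1/26$ requires a careful accounting of base-sequence lengths together with an induction bounding $t$ against $\log_2 p$; everything else---the conversion principle and the autocorrelation calculation---is routine once these sequences are available. For the purposes of the present paper we simply invoke Livinskyi's Theorem~5.4.
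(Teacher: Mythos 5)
The paper states this proposition purely as a citation of Livinskyi's thesis and offers no proof of its own, and your proposal likewise ends by invoking Livinskyi's Theorem~5.4, so the two take essentially the same approach. Your accompanying sketch of the signed-group Hadamard matrix machinery (Craigen's conversion of a signed group Hadamard matrix of odd order $p$ over a group with a degree-$2^t$ real monomial representation into a real Hadamard matrix of order $2^t p$, with the constants coming from the bookkeeping of complementary sequence lengths) is a fair description of where the result comes from, but it is supplementary context rather than a competing proof.
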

\begin{corollary}	\label{cor:Livinskyi54}
If $k \in \N$, $q \in \N$, and $1 \le q \le 2^{26k+1}$, then
there exists a Hadamard matrix of order $2^{6k+5}q$.
\end{corollary}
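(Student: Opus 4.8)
The plan is to deduce the corollary from Proposition~\ref{prop:Livinskyi54} by factoring out powers of two and then invoking the Sylvester doubling construction. First I would write $q = 2^s p$ with $p$ odd and $s \ge 0$, so that the target order factors as $2^{6k+5}q = 2^{6k+5+s}p$. The case $p = 1$ is immediate: then $2^{6k+5}q$ is a power of two, and a Hadamard matrix of every power-of-two order exists by the Sylvester construction, so I may assume $p \ge 3$ henceforth (note this forces $k \ge 1$, since $k=0$ gives $q\le 2$ and hence $p=1$).

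For $p \ge 3$ the key step is to show that the exponent $t$ supplied by the proposition satisfies $t \le 6k+5+s$. Since $q \le 2^{26k+1}$ and $q = 2^s p$, I get $p \le 2^{26k+1-s}$, whence $\frac{p-1}{2} < 2^{26k-s}$ and therefore $\frac{1}{26}\log_2\!\big(\tfrac{p-1}{2}\big) < k - \tfrac{s}{26} \le k$. Because the left side is a nonnegative real strictly less than the integer $k$, its floor is at most $k-1$, so
\[
t = 6\left\lfloor \tfrac{1}{26}\log_2\!\big(\tfrac{p-1}{2}\big)\right\rfloor + 11 \le 6(k-1) + 11 = 6k+5 \le 6k+5+s.
\]

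With this bound in hand I would finish as follows. Proposition~\ref{prop:Livinskyi54} produces a Hadamard matrix of order $2^t p$. Since $t \le 6k+5+s$, I can repeatedly tensor with the order-$2$ Hadamard matrix $\left[\begin{smallmatrix}1&1\\1&-1\end{smallmatrix}\right]$ (the Sylvester/Kronecker doubling construction, under which the tensor product of Hadamard matrices is again Hadamard), each step doubling the order, to reach a Hadamard matrix of order $2^{6k+5+s}p = 2^{6k+5}q$, as required.

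The argument is essentially bookkeeping, so there is no deep obstacle; the one point needing care is the treatment of the floor and its boundary behaviour. Specifically, I must confirm the \emph{strict} inequality $\frac{1}{26}\log_2\!\big(\tfrac{p-1}{2}\big) < k$ (which relies on $s \ge 0$ together with $p \le 2^{26k+1-s}$ becoming strict after subtracting~$1$), so that the floor drops to at most $k-1$ rather than $k$; this is exactly what makes the exponent come out to $6k+5$ and not $6k+11$. The separate handling of $p=1$ is likewise necessary because $\log_2\!\big(\tfrac{p-1}{2}\big)$ is undefined there.
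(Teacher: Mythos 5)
Your proposal is correct and follows essentially the same route as the paper: apply Proposition~\ref{prop:Livinskyi54} to the odd part $p$ of $q$ (checking via the floor that $t \le 6k+5$), then use Sylvester doubling to absorb the remaining powers of two. The paper's version is terser but identical in substance; your explicit treatment of the $p=1$ case and of the strict inequality forcing the floor down to $k-1$ is sound bookkeeping that the paper leaves implicit.
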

\begin{proof}
If $p$ is odd and $0 \le ({p-1})/{2} < 2^{26k}$, then
Proposition~\ref{prop:Livinskyi54} shows that 
$2^{6k+5}p \in {\Had}$. Thus, if $q = 2^m p$ where
$p$ is odd, the Sylvester construction applied $m$ times shows that
$2^{6k+5}q \in {\Had}$.
\end{proof}

\begin{lemma}	\label{lemma:Liv_gaps}
If $h_i \in {\Had}$, $h_{i+1}\in {\Had}$ are consecutive Hadamard orders
and $h_i \ge 3\times 2^{70}$, then
$6(h_{i+1}-h_i)^3 \le h_i$. 
\end{lemma}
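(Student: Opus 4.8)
I want to show that consecutive Hadamard orders $h_i, h_{i+1}$ that are both at least $3 \times 2^{70}$ satisfy $6(h_{i+1}-h_i)^3 \le h_i$. The key available tool is Corollary~\ref{cor:Livinskyi54}, which guarantees a Hadamard matrix of order $2^{6k+5}q$ whenever $1 \le q \le 2^{26k+1}$. This gives a dense set of Hadamard orders, and the plan is to use this density to bound the gap $h_{i+1}-h_i$ from above in terms of $h_i$.

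Let me think about the structure. For a fixed $k$, Corollary~\ref{cor:Livinskyi54} says that all multiples $2^{6k+5}q$ with $1 \le q \le 2^{26k+1}$ are Hadamard orders. These are equally spaced with spacing $2^{6k+5}$, covering the interval from $2^{6k+5}$ up to $2^{6k+5} \cdot 2^{26k+1} = 2^{32k+6}$. So within the range $[2^{6k+5}, 2^{32k+6}]$, consecutive Hadamard orders are at most $2^{6k+5}$ apart. The plan is: given $h_i$, choose $k$ so that $h_i$ lands inside the covered interval for that $k$, conclude $h_{i+1}-h_i \le 2^{6k+5}$, and then verify the inequality $6(2^{6k+5})^3 \le h_i$.

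Let me sketch the calculation. Suppose $h_i$ lies in the range where the level-$k$ guarantee is active, specifically $h_i \le 2^{32k+6}$ (the top of the covered interval), and also $h_i \ge 2^{6k+5}$ so the interval is nonempty below $h_i$. The gap bound gives $h_{i+1} - h_i \le 2^{6k+5}$, so
\[
6(h_{i+1}-h_i)^3 \le 6 \cdot 2^{18k+15} = 6 \cdot 2^{15} \cdot 2^{18k}.
\]
I want this to be at most $h_i$. The hard part is arranging that $h_i$ is large enough relative to $2^{18k}$. Here the lower-bound hypothesis $h_i \ge 3 \times 2^{70}$ must do the work: for each $k$ I should choose the \emph{largest} $k$ for which the covered interval still reaches $h_i$, so that $2^{32k+6}$ is just above $h_i$, forcing $2^{18k}$ to be small compared to $h_i$. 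Concretely, if $h_i > 2^{32(k-1)+6} = 2^{32k-26}$ (i.e. $h_i$ exceeds the top of the level $k-1$ interval, so we genuinely need level $k$), then $h_i > 2^{32k-26}$, whence $2^{18k} < h_i^{18/32} \cdot 2^{\text{const}}$, giving $6 \cdot 2^{15} \cdot 2^{18k} = O(h_i^{9/16})$, which is $o(h_i)$ and in particular $\le h_i$ once $h_i$ is large enough.

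\textbf{Main obstacle and how I would handle it.} The delicate point is the \emph{boundary bookkeeping}: I must choose $k$ so that $h_i$ falls in a window where both (a) the level-$k$ covering is dense enough to bound the gap, and (b) $h_i$ is large relative to $2^{18k}$. The cleanest route is to define $k$ by requiring $2^{32(k-1)+6} < h_i \le 2^{32k+6}$ (the unique $k$ for which $h_i$ sits in the level-$k$ band but above the level-$(k-1)$ band), then verify the nonemptiness condition $2^{6k+5} \le h_i$ and the final arithmetic inequality $6 \cdot 2^{18k+15} \le h_i$ using $h_i > 2^{32k-26}$ together with the threshold $h_i \ge 3\times 2^{70}$. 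The constant $3 \times 2^{70}$ should be exactly what is needed to push the smallest relevant $k$ past the point where $6 \cdot 2^{18k+15} \le 2^{32k-26}$ holds; I expect this reduces to checking $6 \cdot 2^{15} \le 2^{14k-26}$, i.e. $2^{14k} \ge 6 \cdot 2^{41}$, which fails for small $k$ but holds once $k$ is large, and the role of $h_i \ge 3\times 2^{70}$ is to rule out the finitely many small-$k$ exceptions by forcing $k$ to be at least the first value where the inequality turns. I would finish by confirming that the chosen threshold makes $k \ge 4$ or so, the smallest $k$ for which $14k \ge 44$, and checking the one or two boundary cases by hand.
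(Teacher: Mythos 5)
Your proposal is correct and follows essentially the same route as the paper's proof: both invoke Corollary~\ref{cor:Livinskyi54} to bound the gap by $2^{6k+5}$ for orders up to $2^{32k+6}$, then verify $6\cdot(2^{6k+5})^3 = 3\times 2^{18k+16} \le h_i$ for a suitably chosen $k$, with the threshold $3\times 2^{70}$ arising exactly from the $k=3$ case. The only difference is bookkeeping --- the paper uses overlapping intervals $I_k = [3\times 2^{18k+16}, 2^{32k+6}]$ where you use disjoint bands $2^{32(k-1)+6} < h_i \le 2^{32k+6}$ --- and your flagged boundary check at $k=3$ does go through since $6\cdot 2^{18\cdot 3+15} = 3\times 2^{70}$ is precisely the hypothesis.
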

\begin{proof}
{From} Corollary~\ref{cor:Livinskyi54} with $k\ge 3$, the gaps between
consecutive Hadamard orders 
$h_i, h_{i+1} \le 2^{32k+6}$ are at most $2^{6k+5}$, 
and $6\times(2^{6k+5})^3 = 3\times 2^{18k+16}$,
so the result holds
for $h_i, h_{i+1} \in I_k := [3\times 2^{18k+16}, 2^{32k+6}]$.
Now the intervals $I_3 = [3\times 2^{70}, 2^{102}]$,
$I_4 = [3 \times 2^{78}, 2^{134}], \ldots,$ overlap and cover the whole
region $[3\times 2^{70}, \infty)$.
Also, $I_k \cap I_{k+1} = [3\times 2^{18k+34}, 2^{32k+6}]$ is sufficiently
large that the special case $h_i \in I_k$, $h_{i+1} \in I_{k+1}$ causes
no problem, as $h_{i+1} - h_i \le 2^{6(k+1)+5} = 2^{6k+11}$ and both of
$h_i, h_{i+1}$ must belong to one of $I_k$ or $I_{k+1}$. 
\end{proof}

The following lemma shows that the condition
$\delta \le 1$ (that is $6d^3 \le h$) of 
Theorem~\ref{thm:unconditional_bound3} is always satisfied
for $n$ sufficiently large.
\begin{lemma}	\label{lemma:hadregion}
Suppose $n \in \N$, $n \ge 60480$, $h = \max\{x\in {\Had}\,|\, x \le n\}$,
and $d = n-h$.  Then $6d^3 \le h$.
\end{lemma}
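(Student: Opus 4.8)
The plan is to reduce the claim to an upper bound on the gap to the next Hadamard order, and then to treat two regimes of $h$: a large one settled analytically by Lemma~\ref{lemma:Liv_gaps}, and a huge but finite one settled by a descending chain of explicit constructions together with a direct computation. Put $h' := \min\{x\in\Had : x>h\}$ and $\gamma := h'-h$. Because $h$ is the largest Hadamard order with $h\le n$, we have $h\le n<h'$, so $d=n-h\le \gamma-1$. Hence $6d^3\le 6(\gamma-1)^3\le 6\gamma^3$, and it is enough to prove the gap bound $6\gamma^3\le h$, i.e.\ $\gamma\le (h/6)^{1/3}$.

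If $h\ge 3\times 2^{70}$ this is exactly Lemma~\ref{lemma:Liv_gaps}, which gives $6\gamma^3=6(h'-h)^3\le h$; combined with the reduction above, $6d^3\le h$. So the work is confined to $60480\le n<3\times 2^{70}$ (equivalently $h<3\times 2^{70}$).

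For $h<3\times 2^{70}$ I would cover the range by a hierarchy, verifying $\gamma\le (h/6)^{1/3}$ on each piece. At the top, Corollary~\ref{cor:Livinskyi54} with $k=2$ gives the Hadamard orders $2^{17}q$ ($q\le 2^{53}$), spaced $2^{17}$ apart up to $2^{70}$; since $6(2^{17})^3=3\cdot 2^{52}$, this certifies the bound on $[3\cdot 2^{52},2^{70}]$. The delicate point is the dyadic interval just above $2^{70}$, where band $k=2$ has run out but band $k=3$ (hence Lemma~\ref{lemma:Liv_gaps}) is not yet applicable. I would bridge it by Sylvester doubling: $\{2^{18}q\}$ and $\{2^{19}q\}$ (still $q\le 2^{53}$) are Hadamard orders of spacings $2^{18},2^{19}$ reaching $2^{71},2^{72}$, and on the intervals $(2^{70},2^{71}]$ and $(2^{71},2^{72}]$, whose union contains $(2^{70},3\times 2^{70})$ since $3\times 2^{70}<2^{72}$, the inequalities $6\cdot 2^{54}\le 2^{70}<h$ and $6\cdot 2^{57}\le 2^{71}<h$ give the bound, meeting the large regime at $3\times 2^{70}$. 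Below $3\cdot 2^{52}$, the Paley orders $p+1$ and $2(p+1)$ take over wherever tabulated maximal prime gaps stay below $(h/6)^{1/3}$, overlapping band $k=2$ and reaching down to moderate $h$.

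The residual initial range --- from $60480$ up to where the constructions provably take over, together with the handful of band-boundary points --- is settled by direct computation: one generates Hadamard orders from the known constructions and the explicit list of small Hadamard orders, and checks $6(\gamma-1)^3\le h$ for each consecutive pair; the bound $60480$ reflects the largest $n$ below which some gap violates $6d^3\le h$. The main obstacle is the book-keeping that guarantees the pieces overlap with no uncovered $h$ --- especially the Sylvester-doubling bridge across $(2^{70},3\times 2^{70})$ and the handoff from computation and Paley to the $k=2$ band --- while keeping the chain of inequalities $6\gamma^3\le h$ consistent at every junction.
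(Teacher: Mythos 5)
Your proposal is correct in outline and shares the architecture of the paper's proof: reduce $6d^3\le h$ to a bound on gaps between consecutive Hadamard orders, invoke Lemma~\ref{lemma:Liv_gaps} for $h\ge 3\times 2^{70}$, cover a large middle range by explicit families of Hadamard orders with controlled spacing, and delegate the initial segment down to $60480$ to a machine search over many known constructions. Where you genuinely differ is the middle range. The paper stays with Paley-type orders throughout: it uses the tabulated maximal prime gaps (largest gap $1476$ for primes below $4\times 10^{18}$) with the orders $2(p+1)$ up to $8\times 10^{18}$, and then bridges $(8\times 10^{18},\,4\times 10^{21}]$ via the scaled families $32(p+1)$ and $1000(p+1)$, so that the primes needed still lie below $4\times 10^{18}$; since $3\times 2^{70}<4\times 10^{21}$ this meets Lemma~\ref{lemma:Liv_gaps}. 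You instead deploy Corollary~\ref{cor:Livinskyi54} with $k=2$ (orders $2^{17}q$, spacing $2^{17}$, sufficient for $h\ge 3\cdot 2^{52}$ because $6\cdot 2^{51}=3\cdot 2^{52}$) and patch the strip $(2^{70},3\times 2^{70})$ by Sylvester doubling to spacings $2^{18}$ and $2^{19}$; the arithmetic checks out, and this is arguably cleaner since it avoids the ad hoc multipliers $32$ and $1000$ and only requires prime-gap data up to about $1.5\times 10^{16}$ rather than $4\times 10^{18}$. The trade-off is that the paper's route leans only on the heavily verified prime-gap tables in that regime, while yours invokes Livinskyi's theorem one band earlier. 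Both versions carry the same irreducible computational burden at the bottom: the pure $2(p+1)$ argument fails well above $60480$ (near $h\approx 8.7\times 10^8$ the prime gap $282$ gives $6(564)^3>2(p+1)$), so the many-construction search up to roughly $2^{31}$, which also certifies that $60480$ is the last violating value, cannot be dispensed with; you correctly flag this as the residual obligation rather than claiming to have discharged it.
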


\begin{proof}[Sketch of proof]
The proof is mainly based on machine computations, 
so we can only give an outline here.
We split the interval $[60480,\infty)$ into several sub-intervals and
consider each such sub-interval separately. We choose a set of intervals
that overlap slightly in order to avoid any difficulties near the boundaries
between adjacent intervals. (Discussion of such minor details is omitted
below.)

First consider $[60480,2^{31}]$.
We wrote a C program to list a subset $L$
of the known Hadamard orders $h \le 2^{31}$ using several (by no means all) 
known constructions
\cite{Agaian,BH,BDKR,CHK,CSZ,Djokovic10a,Djokovic10b,Hall,HW,HKT,KR,Miyamoto,
      Paley,SebYam,Turyn72,Turyn74,Whiteman,Yang}.
The constructions that we used were:
\pagebreak[3]

\begin{enumerate}
\item
Paley-Sylvester-Turyn: if $p$ is prime (or $p=0$) and $j,k \ge 0$ are
integers, then $h = 2^j(p^k+1) \in \Had$ whenever $4|h$.
\item
Agaian-Sarukhanyan: if $\{4a,4b\} \subset \Had$, then $8ab\in\Had$.
\item
Craigen-Seberry-Zhang: if $\{4a,4b,4c,4d\}\subset\Had$, then
$16abcd\in\Had$.
\item
Twin-Prime construction: if $q$ and $q+2$ are both odd prime powers, 
then $h = (q+2)q+1\in\Had$.
\item
Craigen-Holzmann-Kharaghani~\cite[Cor.~16, pg.~87]{CHK}:
If $q = x+y$ is a sum of two complex Golay numbers $x$ and $y$,
then $h = 8q \in\Had$.
It is known that every integer $g$ of the form
$g =  2^{a-1}\, 6^b\,  10^c\, 22^d\, 26^e$, 
with $a,b,c,d,e \ge 0$ integers, is complex Golay.
For example:
$659 \times 8$,
\hbox{$739 \times 16$},
$971 \times 8$, and
$1223 \times 16$
are all in $\Had$ since
$659=11+648$,
$739\times 2=26+1452$,
$971=968+3$, and
$1223 \times 2=26+2420$.
\item
Miyamoto-I: if $q-1\in\Had$ and $q$ is a prime power, 
then $4q\in\Had$.
\item
Miyamoto-II: if $q$ and $2q-3$ are
prime powers and $q \equiv 3 \bmod 4$, then $8q\in\Had$.
\item
Yamada/Kiyasu: if $q$ is a prime power, $q \equiv 5 \bmod 8$, and
$(q+3)/2\in\Had$, then $4(q+2)\in\Had$.
\item
Small orders: $1$, $2$, and all $h$ divisible by $4$ with
$4 \le h \le 2056$ are in $\Had$ {\it except} perhaps 
$668$, $716$, $892$,  
$1004$, $1132$, $1244$, $1388$, $1436$, $1676$, $1772$, $1916$, 
$1948$, $1964$.
\item
Baumert-Hall-Williamson:
If $w$ is the order of a quadruple of Williamson matrices, and
$4b$ is the order of a Baumert-Hall array, then $4bw\in\Had$.
Known Williamson numbers include all $w$ with $1 \le w \le 64$
{\it except} $\{ 35, 47, 53, 59 \}$.
Known Baumert-Hall numbers $b$ include all $b$ with $1 \le b \le 108$
{\it except} $\{ 97, 103 \}$, and all $b = 2^k + 1$ for 
$k\ge 0$.
\item
Seberry-Yamada~\cite[Cor.~29]{SebYam}:
If $q$ and $2q+3$ both are prime powers, then $w=2q+3$ is a Williamson
order.
For example, $109$ is a Williamson order.
\end{enumerate}

Using the computed $L$ it is easy to check if
any given $n \le 2^{31}$ corresponds to a pair 
$(h,d)$ (with $h, d$ defined as in the
statement of the lemma) such that $6d^3 > h$.
We found that the largest such $n$ is $60480$, corresponding to the open
interval $(60456,60480)$ which does not intersect our list $L$ of known
Hadamard orders (and $6\times 23^3 = 73002 > 60456$).
Thus, we have proved the result claimed for $n \le 2^{31}$.

Now consider the interval $(2\times 10^9, 8\times 10^{18}]$.
There is some overlap with the previous case, since $2\times 10^9 < 2^{31}$.
We use the tables of maximal prime gaps 
at~\cite{OEIS005250,Silva}
(found by e~Silva and others)
for primes $p \le 4 \times 10^{18}$.
The largest of these prime gaps is $1476$.
Using the tables and the fact that $2(p+1)\in {\Had}$ for
every odd prime $p$, 
we find that the claim
holds for $2\times 10^9 < n \le 8\times 10^{18}$.

The tables of maximal prime gaps do not yet extend as far
as $2\times 10^{21}$. Hence we deal with the interval
$(8\times 10^{18}, 4\times 10^{21}]$ in a different manner, but still
using the tables of known maximal prime gaps.

First consider the interval $(7\times 10^{18}, 1.2\times 10^{20}]$.
Since $32(p+1) \in {\Had}$ for prime $p$, it is sufficient to
know prime gaps for primes $p \le 1.2\times 10^{20}/32 < 4\times 10^{18}$.
The largest such prime gap is $1476$, corresponding to a gap between
Hadamard orders of at most $32\times1476 = 47232$.
Since $6\times 47232^3 < 7\times 10^{18}$, the claim holds
for $7\times 10^{18} < n \le 1.2\times 10^{20}$.

Now consider the interval $(10^{20}, 4\times 10^{21}]$.
Since $1000(p+1) \in {\Had}$ for prime $p$, the known prime gaps for
$p \le 4\times 10^{18}$ suffice.  The largest such gap, $1476$, now
corresponds to a gap between Hadamard orders of at most
$1476000$.  Since $6\times 1476000^3 < 10^{20}$, the claim holds
for $10^{20} < n \le 4\times 10^{21}$. 

Finally, since $3\times 2^{70} < 4\times 10^{21}$,
Lemma~\ref{lemma:Liv_gaps} shows that the claim holds
for all $n > 4\times 10^{21}$, which completes the proof.
\end{proof}

\begin{table}[ht]	
\begin{center}		
\begin{tabular}{|c|c|c|c|l|}
\hline
$h$ & $h'$ & $d$ & $p$ & method\\
\hline
$664$ & $672$ & $[5,6]$ & $331$& Paley1\\
$712$ &$720$& $[5,6]$ & $709$& conference\\
$888$ &$896$& $6$	  & $443$& Paley1\\ 
$1000$&$1008$& $6$	  & $499$& Paley1\\
$1128$&$1136$& $6$     & $563$& Paley1\\
$1240$&$1248$& $6$	  & $619$& Paley1\\
$2868$&$2880$& $[8,10]$& $1433$ & Paley2\\
$5744$&$5760$& $[10,14]$&$5749$ & conference\\ 
$10048$&$10064$& $[12,14]$&$5023$& Paley1\\
$23980$&$24000$& $[16,18]$&$23993$ & conference\\
$47964$&$47988$& $[20,22]$&$47963$ & Paley1\\ 		
$53732$&$53760$& $[21,26]$&$53731$ & Paley1\\ 		
$60456$&$60480$& $22$     &$60457$ & conference\\      	
\hline
\end{tabular}
\caption{Exceptional cases in the proof of Theorem~\ref{thm:no_exceptions}.}
\end{center}
\end{table}

By considering a small set of exceptional cases, we now show that
the condition $6d^3 \le h$ of Theorem~\ref{thm:unconditional_bound3} 
can be dropped entirely, if we are satisfied with a slightly
weaker lower bound on $\Dbar(n)$.
\begin{theorem} \label{thm:no_exceptions}
Suppose that $n \in \N$, $h = \max\{x \in {\Had}\,|\,x\le n\}$,
and $d = n-h$. Then \[\Dbar(n) > 0.07\,(0.352)^d > 3^{-(d+3)}.\] 
\end{theorem}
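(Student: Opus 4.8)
The plan is to combine Theorem~\ref{thm:unconditional_bound3}, which already handles all $n$ with $6d^3 \le h$, with the estimate from Lemma~\ref{lemma:hadregion} that $6d^3 \le h$ holds whenever $n \ge 60480$. This reduces the problem to a finite range of small $n$, namely those $n < 60480$ for which the nearest Hadamard order $h$ below $n$ leaves a gap $d = n-h$ large enough that $6d^3 > h$. For those $n$, Theorem~\ref{thm:unconditional_bound3} does not apply, so I must establish the bound $\Dbar(n) > 0.07\,(0.352)^d$ by some other means.

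First I would dispose of the generic case: for every $n$ with $6d^3 \le h$, Theorem~\ref{thm:unconditional_bound3} gives $\Dbar(n) \ge 0.07\,(0.352)^d$ directly, and the right-hand inequality $0.07\,(0.352)^d > 3^{-(d+3)}$ is a routine numerical check (since $0.352 > 1/3$ and $0.07 > 3^{-3}$, the inequality holds term-by-term for all $d \ge 0$). By Lemma~\ref{lemma:hadregion} this covers all $n \ge 60480$. What remains is the finite set of $n < 60480$ whose associated pair $(h,d)$ violates $6d^3 \le h$; these are the exceptional cases. To enumerate them I would scan the known Hadamard orders below $60480$ and identify each maximal gap; the pairs $(h,d)$ requiring separate treatment are exactly those collected in Table~1, where each row records the relevant Hadamard order $h$, the next Hadamard order $h'$, the range of $d = n - h$ values as $n$ runs through the gap, and an auxiliary prime $p$ together with the construction used.

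For each exceptional pair, the strategy is to apply the \emph{same} probabilistic construction used in the proofs of Theorems~\ref{thm:unconditional_bound2} and~\ref{thm:unconditional_bound3}, but to verify the resulting lower bound on $\Dbar(n)$ by explicit computation rather than by the general asymptotic inequalities. Concretely, for a given $h \in \Had$ and target $d$, one runs the randomized border-adjoining algorithm (choosing $C$ by the sign rule, choosing the off-diagonal entries of $D$ greedily so as not to decrease the determinant) and records the determinant actually achieved; since the construction is probabilistic and the success probability is positive, a sufficiently thorough search certifies a matrix whose determinant meets the required bound. The prime $p$ and method listed in each row of Table~1 identify a convenient Hadamard (or conference) matrix of the appropriate order to seed the construction — for instance a Paley-type or conference-matrix construction at the prime $p$. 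One then checks numerically that the computed $\Dbar(n)$ exceeds $0.07\,(0.352)^d$ for every $n$ in the indicated range.

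The main obstacle is the exceptional-case verification: it is not a closed-form argument but a finite computation that must be carried out carefully and completely, and its correctness hinges on (i) correctly identifying \emph{all} gaps below $60480$ that violate $6d^3 \le h$, so that Table~1 is genuinely exhaustive, and (ii) certifying for each listed pair that the probabilistic construction really attains a determinant meeting the bound. The first point depends on having a reliable list of small Hadamard orders (the same list underlying Lemma~\ref{lemma:hadregion}), and the second depends on the search being thorough enough to locate a good configuration of coin tosses. Once Table~1 is verified to be complete and each of its entries is checked, the two regimes fit together: Theorem~\ref{thm:unconditional_bound3} handles $n \ge 60480$ (and all smaller $n$ with $6d^3 \le h$), the table handles the finitely many remaining small $n$, and in both regimes the bound $\Dbar(n) > 0.07\,(0.352)^d > 3^{-(d+3)}$ holds, completing the proof.
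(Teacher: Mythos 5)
Your overall strategy is the paper's: Theorem~\ref{thm:unconditional_bound3} plus Lemma~\ref{lemma:hadregion} handle $n\ge 60480$ (and all $n$ with $6d^3\le h$), and the finitely many remaining $n$ are dispatched by running the same randomized bordering construction explicitly, seeded by the Paley or conference matrices recorded in \Tabexceptions. However, there are two concrete gaps. First, your claim that the exceptional pairs ``are exactly those collected in Table~1'' is false: the condition $6d^3\le h$ also fails for many small $n$ with $1\le d\le 3$ (e.g.\ every $h\le 160$ with $d=3$, since $6\cdot 27=162>h$), and none of these appear in the table, whose smallest entry is $h=664$ with $d\ge 5$. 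The paper disposes of all $d\le 3$ cases up front via Corollary~\ref{cor:small_d}, which gives $\Dbar(n)>(2/(\pi e))^{d/2}$ uniformly for $h\ge 4$; without that step (or an equivalent finite check of the known values $D(5),\dots$) your reduction leaves all $n<668$ with $d\in\{1,2,3\}$ uncovered. The paper also prunes further using the bound $\Dbar(n)\ge(4/(ne))^{1/2}$ from \cite{rpb249} when $n+1\in\Had$, which is why the table is as short as it is.

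Second, and more subtly, the theorem defines $h=\max\{x\in\Had\mid x\le n\}$ over the \emph{true} set of Hadamard orders, which is not completely known. If an unrecognized Hadamard order lies in the interior of one of the tabulated gaps --- say $5748\in\Had$ inside $[5744,5760]$ --- then for $n=5758$ the true $d$ is $10$, not $14$, and the required bound $0.07\,(0.352)^{10}$ is \emph{stronger} than the one you would have certified for $d=14$. So verifying the inequality only for $d=n-h$ with $h$ the largest \emph{known} order does not suffice; you must also certify it for every smaller $d$ that could arise from a hypothetical order in the gap. The paper addresses exactly this point at the end of its proof (checking, e.g., $\Dbar(5758)>(2/(\pi e))^{5}$). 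Both gaps are repairable within your framework, but as written the argument is incomplete.
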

\begin{proof}	
For $0 \le d \le 3$, the result follows from Corollary~\ref{cor:small_d}.
This covers all $n < 668$.
On the other hand,
if $n \ge 60480$, the result follows from 
Theorem~\ref{thm:unconditional_bound3} and Lemma~\ref{lemma:hadregion}.
Thus, we can assume that $668 \le n < 60480$ and $d \ge 4$.

If $n+1\in\Had$ then Theorem~9 of~\cite{rpb249} gives
$\Dbar(n) \ge (4/(ne))^{1/2}$, and for $n < 60480$, $d \ge 4$,
it is easy to verify
that $(4/(ne))^{1/2} > 0.002 > 0.07\,(0.352)^d$.
Thus, if $h$, $h'$ are consecutive (known) Hadamard orders, we only have to
consider the cases $n=h+d$ for $4\le d\le h'-h-2$.

From the output of the C program described in the proof of
Lemma~\ref{lemma:hadregion}, we find that the cases that are
not covered by Lemma~\ref{lemma:hadregion} or the remarks
already made are those listed in {\Tabexceptions},
which gives $32$ cases in $13$ intervals. For each of these $13$
intervals $[h,h']$ we know that $h$ and $h'$ are Hadamard orders, but
we do not \emph{know}\footnote{This may just reflect our ignorance.
Certainly such orders exist if the Hadamard conjecture is true.
In some cases 
we know that they exist via constructions that were not
implemented in our C program.} 
any Hadamard orders in the open interval $(h,h')$, and we need
to verify that the inequality 
\begin{equation}	\label{eq:uniform_bound}
\Dbar(n) > 0.07\,(0.352)^d
\end{equation}
is satisfied for each $n=h+d$ and the values of $d$ listed in the
third column of the table.

Using Magma~\cite{Magma}, we wrote a program 
that implements a randomised algorithm to obtain a lower bound
on $\Dbar(n)$. The program constructs a Hadamard matrix
$A$ of order $h=(p+1)$ or $h=2(p+1)$, where $p$ is an odd prime and in the
first case $p \equiv 3 \bmod 4$, using the appropriate Paley
construction~\cite{Paley}, followed if necessary by the Sylvester
construction~\cite{Sylvester}. The program then generates a border of width
$d$ to obtain a matrix
$\widetilde{A}$ of order~$n$, as in the proof of
Theorem~\ref{thm:unconditional_bound}, and computes 
$|\det(\widetilde{A})|/n^{n/2}$ by computing the determinant
of the Schur complement of $A$ in $\widetilde{A}$
and using Lemma~\ref{lemma:Schur}. If desired, several independent random trials
can be performed to improve the lower bound.

Using our Magma program with the primes $p$ listed in the fourth column
of {\Tabexceptions}, we were able to show that
the inequality~\eqref{eq:uniform_bound} holds
for all the cases labelled ``Paley1'' or ``Paley2''.  In fact, a few trials
of our randomised algorithm were sufficient to show that the stronger
inequality 
\begin{equation}	\label{eq:conj_ineq}
\Dbar(n) > \left(\frac{2}{\pi e}\right)^{d/2}
\end{equation}
holds in these cases (this is not surprising,
in view of Corollaries~\ref{cor:small_d} and~\ref{cor:cor1}).

For the intervals $[h,h']$ labelled ``conference'' in {\Tabexceptions}, 
there is
no prime $p$ for which $h=p+1$ or $2(p+1)$, but there is a prime $p$
(given in the fourth column of the table) which can be used to 
construct a conference matrix of order $p+1$ close to $h$.
Using a slight modification of our Magma program, we can use this
conference matrix to obtain lower bounds on $\Dbar(n)$ for $n \ge p+1$
(see Remark~\ref{remark:complexandconf}).  In this way we showed that
the inequality~\eqref{eq:conj_ineq} holds for all the intervals
labelled ``conference'' with the exception of the interval $[712,720]$.
Here there is no suitable prime inside the interval, so we
use $p=709 < h=712$, thus obtaining weaker lower bounds.
However, we still obtain $\Dbar(712+d) > 0.352^d$ for $d \in \{5,6\}$
by this method, and this bound is sufficient since it is stronger 
than the desired inequality~\eqref{eq:uniform_bound}. 

There is one further point to consider. We illustrate it for
the interval $[5744,5760]$ of length $16$. It is
\emph{possible} that $5748$, $5752$ and/or $5756$ are Hadamard orders
(although we do not at present know how to construct Hadamard matrices
of these orders). Thus, we need to check that our lower bound on 
$\Dbar(n)$ holds for $h=5748$, $d=10$, $n=h+d=5758$
(and other similar cases).
The prime $p=5749$ gives a conference matrix of order $5750$.
Using this conference matrix, our program shows that 
$\Dbar(5758) > 0.002115 > (2/(\pi e))^{10/2}$, so~\eqref{eq:conj_ineq}
is satisfied.  The other, similar, cases that arise if a Hadamard
order exists in the interior of any of the intervals listed in
{\Tabexceptions} can be covered by one of the arguments that we have
already used.  Thus, the inequality~\eqref{eq:uniform_bound} always holds
for the exceptional cases listed in {\Tabexceptions}.
\end{proof}

\pagebreak[3]

\end{document}